\newtheorem{algorithm}{Weak Galerkin Algorithm}
\newcommand{\bn}{{\bf n}}
\def\T{{\mathcal T}}
\def\E{{\mathcal E}}
\def\l{{\langle}}
\def\r{{\rangle}}
\def\bn{{\bf n}}
\def\bbQ{\mathbb{Q}}
\newcommand{\pT}{{\partial T}}
\newtheorem{defi}{Definition}[section]
\def\3bar{{|\hspace{-.02in}|\hspace{-.02in}|}}
\title{A Weak Galerkin Finite Element Method with Polynomial Reduction}
\author{Lin Mu\thanks{Department of Mathematics, Michigan State University,
      East Lansing, MI 48824 (linmu@ msu.edu)} \and Junping
Wang\thanks{Division of Mathematical Sciences, National Science
Foundation, Arlington, VA 22230 (jwang@\break nsf.gov). The research
of Wang was supported by the NSF IR/D program, while working at
National Science Foundation. However, any opinion, finding, and
conclusions or recommendations expressed in this material are those
of the author and do not necessarily reflect the views of the
National Science Foundation,} \and Xiu Ye\thanks{Department of
Mathematics, University of Arkansas at Little Rock, Little Rock, AR
72204 (xxye@ualr.edu). This research was supported in part by
National Science Foundation Grant DMS-1115097.}}
\begin{document}

\maketitle

\begin{abstract}
The novel idea of weak Galerkin (WG) finite element methods is on
the use of weak functions and their weak derivatives defined as
distributions. Weak functions and weak derivatives can be
approximated by polynomials with various degrees. Different
combination of polynomial spaces leads to different weak Galerkin
finite element methods, which makes WG methods highly flexible and
efficient in practical computation. This paper explores the
possibility of optimal combination of polynomial spaces that
minimize the number of unknowns in the numerical scheme, yet without
compromising the accuracy of the numerical approximation. For
illustrative purpose, the authors use second order elliptic problems
to demonstrate the basic idea of polynomial reduction. A new weak
Galerkin finite element method is proposed and analyzed. This new
finite element scheme features piecewise polynomials of degree $k\ge
1$ on each element plus piecewise polynomials of degree $k-1\ge 0$
on the edge or face of each element. Error estimates of optimal
order are established for the corresponding WG approximations in
both a discrete $H^1$ norm and the standard $L^2$ norm. In addition,
the paper presents a great deal of numerical experiments to
demonstrate the power of the WG method in dealing with finite
element partitions consisting of arbitrary polygons in two
dimensional spaces or polyhedra in three dimensional spaces. The
numerical examples include various finite element partitions such as
triangular mesh, quadrilateral mesh, honey comb mesh in 2d and mesh
with deformed cubes in 3d. The numerical results show a great
promise of the robustness, reliability, flexibility and accuracy of
the WG method.
\end{abstract}

\begin{keywords}
weak Galerkin, finite element methods, discrete gradient, second-order
elliptic problems, polyhedral meshes
\end{keywords}

\begin{AMS}
Primary: 65N15, 65N30; Secondary: 35J50
\end{AMS}
\pagestyle{myheadings}

\section{Introduction}\label{Section:Introduction}

This paper is concerned with weak Galerkin (WG) finite element
methods by exploring optimal use of polynomial approximating spaces.
In general, weak Galerkin refers to finite element techniques for
partial differential equations in which differential operators
(e.g., gradient, divergence, curl, Laplacian) are approximated by
weak forms as distributions. The main idea of weak Galerkin finite
element methods is the use of weak functions and their corresponding
discrete weak derivatives in algorithm design. For the second order
elliptic equation, weak functions have the form of $v=\{v_0,v_b\}$
with $v=v_0$ inside of each element and $v=v_b$ on the boundary of
the element. Both $v_0$ and $v_b$ can be approximated by polynomials
in $P_\ell(T)$ and $P_s(e)$ respectively, where $T$ stands for an
element and $e$ the edge or face of $T$, $\ell$ and $s$ are
non-negative integers with possibly different values. Weak
derivatives are defined for weak functions in the sense of
distributions. For computing purpose, one needs to approximate the
weak derivatives by polynomials. For example, for the weak gradient
operator, one may approximate it in the polynomial space
$[P_m(T)]^d$. Various combination of $(P_\ell(T),P_s(e),[P_m(T)]^d)$
leads to different class of weak Galerkin methods tailored for
specific partial differential equations. The goal of this paper is
to explore optimal combination of the polynomial spaces $P_\ell(T)$
and $P_s(e)$ that minimizes the number of unknowns without
compromising the rate of convergence for the corresponding WG
method.

For simplicity, we demonstrate the idea of optimality for
polynomials by using the second order elliptic problem that seeks an
unknown function $u$ satisfying
\begin{eqnarray}
-\nabla\cdot (a\nabla u)&=&f,\quad \mbox{in}\;\Omega,\label{pde}\\
u&=&g,\quad\mbox{on}\;\partial\Omega,\label{bc}
\end{eqnarray}
where $\Omega$ is a polytopal domain in $\mathbb{R}^d$ (polygonal or
polyhedral domain for $d=2,3$), $\nabla u$ denotes the gradient of
the function $u$, and $a$ is a symmetric
$d\times d$ matrix-valued function in $\Omega$. We shall assume that
there exists a positive number $\lambda>0$ such that
\begin{equation}\label{ellipticity}
\xi^ta\xi\ge \lambda \xi^t\xi,\qquad\forall
\xi\in\mathbb{R}^d.
\end{equation}
Here $\xi$ is understood as a column vector and $\xi^t$ is the transpose
of $\xi$.

A weak Galerkin method has been introduced and analyzed in \cite{wy}
for second order elliptic equations based on a {\em discrete weak
gradient} arising from local {\em RT} \cite{rt} or {\em BDM}
\cite{bdm} elements. More specifically, in the case of {\em BDM}
element of order $k\ge 1$, the gradient space is taken as
$[P_m(T)]^d\equiv [P_k(T)]^d$ and the weak functions are defined by
using $(P_\ell(T),P_s(e))\equiv (P_{k-1}(T), P_k(e))$. For the {\em
RT} element of $k\ge 0$, the gradient space is the usual {\em RT}
element for the vector component while the weak functions are given
by $(P_\ell(T),P_s(e))\equiv (P_{k}(T), P_k(e))$. Due to the use of
the {\em RT} and {\em BDM} elements, the WG finite element
formulation of \cite{wy} is limited to classical finite element
partitions of triangles ($d=2$) or tetrahedra ($d=3$). In addition,
the corresponding WG scheme exhibits a close connection with the
standard mixed finite element method for (\ref{pde})-(\ref{bc}).

The main goal of this paper is to investigate the
possibility of optimal combination of polynomial spaces that
minimize the number of unknowns in the numerical scheme without
compromising the order of
convergence. The new WG scheme will use the configuration of
$(P_k(T), P_{k-1}(e), P_{k-1}(T)^d)$, and the corresponding WG
solution converges to the exact solution of (\ref{pde})-(\ref{bc})
with rate of $O(h^k)$ in $H^1$ and $O(h^{k+1})$ in $L^2$ norm,
provided that the exact solution of the original problem is
sufficiently smooth. It should be pointed out that the unknown $v_0$
associated with the interior of each element can be eliminated in
terms of the unknown $v_b$ defined on the element boundary in
practical implementation. This means that, for problems in
$\mathbb{R}^2$, only edges of the finite element partition shall
contribute unknowns ($k$ unknowns from each edge) to the global
stiffness matrix problem. The new WG scheme is, therefore, a natural
extension of the classical Crouzix-Raviart $P_1$ non-conforming
triangular element to arbitrary order and arbitrary polygonal
partitions.

It have been proved rigorously in \cite{wy-m} that $P_k$ type of
polynomials can be used in weak Galerkin finite element procedures
on any polygonal/polyhedral elements. It contrasts to the use of
polynomials $P_k$ for triangular elements and tensor products $Q_k$
for quadrilateral elements in classic finite element methods. In
practice, allowing arbitrary shape in finite element partition
provides a great flexibility in both numerical approximation and
mesh generation, especially in regions where the domain geometry is
complex. Such a flexibility is also very much appreciated in
adaptive mesh refinement methods. Another objective of this paper is
to study the reliability, flexibility and accuracy of the weak
Galerkin method through extensive numerical tests. The first and
second order weak Galerkin elements are tested on partitions with
different shape of polygons and polyhedra. Our numerical results
show optimal order of convergence for $k=1,2$ on triangular,
quadrilateral, honey comb meshes in 2d and deformed cube in 3d.

One close relative of the WG finite element method of this paper is
the hybridizable discontinuous Galerkin (HDG) method \cite{cgl}. But
these two methods are fundamentally different in concept and
formulation. The HDG method is formulated by using the standard
mixed method approach for the usual system of first order equations,
while the key to WG is the use of discrete weak differential
operators. For the second order elliptic problem
(\ref{pde})-(\ref{bc}), these two methods share the same feature of
approximating first order derivatives or fluxes through a formula
that was commonly employed in the mixed finite element method. For
high order PDEs, such as the biharmonic equation
\cite{mwy-biharmonic}, the WG method is greatly
different from the HDG. It should be emphasized that the concept of
weak derivatives makes WG a widely applicable numerical technique
for a large variety of partial differential equations which we shall
report in forthcoming papers.

The paper is organized as follows. In Section
\ref{Section:weak-gradient}, we shall review the definition of the
weak gradient operator and its discrete analogues. In Section
\ref{Section:wg-fem}, we shall describe a new WG scheme. Section
\ref{Section:wg-massconservation} will be devoted to a discussion of
mass conservation for the WG scheme. In Section
\ref{Section:L2projections}, we shall present some technical
estimates for the usual $L^2$ projection operators. Section
\ref{Section:error-analysis} is used to derive an optimal order
error estimate for the WG approximation in both $H^1$ and $L^2$
norms. Finally in Section \ref{Section:numerical}, we shall present
some numerical results that confirm the theory developed in earlier
sections.

\section{Weak Gradient and Discrete Weak Gradient}\label{Section:weak-gradient}

Let $K$ be any polytopal domain with boundary $\partial K$. A {\em
weak function} on the region $K$ refers to a function $v=\{v_0,
v_b\}$ such that $v_0\in L^2(K)$ and $v_b\in H^{\frac12}(\partial
K)$. The first component $v_0$ can be understood as the value of $v$
in $K$, and the second component $v_b$ represents $v$ on the
boundary of $K$. Note that $v_b$ may not necessarily be related to
the trace of $v_0$ on $\partial K$ should a trace be well-defined.
Denote by $W(K)$ the space of weak functions on $K$; i.e.,
\begin{equation}\label{W(K)}
W(K)= \{v=\{v_0, v_b \}:\ v_0\in L^2(K),\; v_b\in
H^{\frac12}(\partial K)\}.
\end{equation}
Define $(v,w)_D=\int_Dvwdx$ and $\l v, w\r_\gamma=\int_\gamma vwds$.

The weak gradient operator, as was introduced in \cite{wy}, is
defined as follows for the completion of the paper.
\medskip

\begin{defi}
The dual of $L^2(K)$ can be identified with itself by using the
standard $L^2$ inner product as the action of linear functionals.
With a similar interpretation, for any $v\in W(K)$, the {\em weak
gradient} of $v$ is defined as a linear functional $\nabla_w v$ in
the dual space of $H(div,K)$ whose action on each $q\in H(div,K)$ is
given by
\begin{equation}\label{weak-gradient}
(\nabla_w v, q)_K = -(v_0, \nabla\cdot q)_K + \langle v_b,
q\cdot\bn\rangle_{\partial K},
\end{equation}
where $\bn$ is the outward normal direction to $\partial K$,
$(v_0,\nabla\cdot q)_K=\int_K v_0 (\nabla\cdot q)dK$ is the action
of $v_0$ on $\nabla\cdot q$, and $\langle v_b,
q\cdot\bn\rangle_{\partial K}$ is the action of $q\cdot\bn$ on
$v_b\in H^{\frac12}(\partial K)$.
\end{defi}

\medskip

The Sobolev space $H^1(K)$ can be embedded into the space $W(K)$ by
an inclusion map $i_W: \ H^1(K)\to W(K)$ defined as follows
$$
i_W(\phi) = \{\phi|_{K}, \phi|_{\partial K}\},\qquad \phi\in H^1(K).
$$
With the help of the inclusion map $i_W$, the Sobolev space $H^1(K)$
can be viewed as a subspace of $W(K)$ by identifying each $\phi\in
H^1(K)$ with $i_W(\phi)$. Analogously, a weak function
$v=\{v_0,v_b\}\in W(K)$ is said to be in $H^1(K)$ if it can be
identified with a function $\phi\in H^1(K)$ through the above
inclusion map. It is not hard to see that the weak gradient is
identical with the strong gradient (i.e., $\nabla_w v=\nabla v$) for
smooth functions $v\in H^1(K)$.

Denote by $P_{r}(K)$ the set of polynomials on $K$ with degree no
more than $r$. We can define a discrete weak gradient operator by
approximating $\nabla_w$ in a polynomial subspace of the dual of
$H(div,K)$.

\smallskip

\begin{defi}
The discrete weak gradient operator, denoted by
$\nabla_{w,r, K}$, is defined as the unique polynomial
$(\nabla_{w,r, K}v) \in [P_r(K)]^d$ satisfying the following
equation
\begin{equation}\label{dwd}
(\nabla_{w,r, K}v, q)_K = -(v_0,\nabla\cdot q)_K+ \langle v_b,
q\cdot\bn\rangle_{\partial K},\qquad \forall q\in [P_r(K)]^d.
\end{equation}
\end{defi}

By applying the usual integration by part to the first term on the
right hand side of (\ref{dwd}), we can rewrite the equation
(\ref{dwd}) as follows
\begin{equation}\label{dwd-2}
(\nabla_{w,r, K}v, q)_K = (\nabla v_0,q)_K+ \langle v_b-v_0,
q\cdot\bn\rangle_{\partial K},\qquad \forall q\in [P_r(K)]^d.
\end{equation}

\section{Weak Galerkin Finite Element Schemes}\label{Section:wg-fem}

Let ${\cal T}_h$ be a partition of the domain $\Omega$ consisting of
polygons in two dimension or polyhedra in three dimension satisfying
a set of conditions specified in \cite{wy-m}. Denote by ${\cal E}_h$
the set of all edges or flat faces in ${\cal T}_h$, and let ${\cal
E}_h^0={\cal E}_h\backslash\partial\Omega$ be the set of all
interior edges or flat faces. For every element $T\in \T_h$, we
denote by $h_T$ its diameter and mesh size $h=\max_{T\in\T_h} h_T$
for ${\cal T}_h$.

For a given integer $k\ge 1$, let $V_h$ be the weak Galerkin finite
element space associated with $\T_h$ defined as follows
\begin{equation}\label{vhspace}
V_h=\{v=\{v_0,v_b\}:\; v_0|_T\in P_k(T),\ v_b|_e\in P_{k-1}(e),\ e\in\pT,  T\in \T_h\}
\end{equation}
and
\begin{equation}\label{vh0space}
V^0_h=\{v: \ v\in V_h,\  v_b=0 \mbox{ on } \partial\Omega\}.
\end{equation}
We would like to emphasize that any function $v\in V_h$ has a single
value $v_b$ on each edge $e\in\E_h$.

For each element $T\in \T_h$, denote by $Q_0$ the $L^2$ projection
from $L^2(T)$ to $P_k(T)$  and by $Q_b$ the $L^2$ projection from
$L^2(e)$ to $P_{k-1}(e)$. Denote by $\bbQ_h$ the $L^2$ projection
onto the local discrete gradient space $[P_{k-1}(T)]^d$. Let
$V=H^1(\Omega)$. We define a projection operator $Q_h: V \to V_h$ so
that on each element $T\in\T_h$
\begin{equation}\label{Qh-def}
Q_h v=\{Q_0v_0, Q_bv_b\},\qquad \{v_0,v_b\}=i_W(v)\in W(T).
\end{equation}

Denote by $\nabla_{w,k-1}$ the discrete weak gradient operator on
the finite element space $V_h$ computed by using
(\ref{dwd}) on each element $T$; i.e.,
$$
(\nabla_{w,k-1}v)|_T =\nabla_{w,k-1, T} (v|_T),\qquad \forall v\in
V_h.
$$
For simplicity of notation, from now on we shall drop the subscript
$k-1$ in the notation $\nabla_{w,k-1}$ for the discrete weak
gradient.

Now we introduce two forms on $V_h$ as follows:
\begin{eqnarray*}
a(v,w) & = & \sum_{T\in {\cal T}_h}( a\nabla_w v, \nabla_w w)_T,\\
s(v,w) & = & \rho\sum_{T\in {\cal T}_h} h_T^{-1}\langle Q_bv_0-v_b,
Q_bw_0-w_b\rangle_{\partial T},
\end{eqnarray*}
where $\rho$ can be any positive number. In practical
computation, one might set $\rho=1$. Denote by $a_s(\cdot,\cdot)$ a
stabilization of $a(\cdot,\cdot)$ given by
$$
a_s(v,w)=a(v,w)+s(v,w).
$$

\begin{algorithm}
A numerical approximation for (\ref{pde}) and (\ref{bc}) can be
obtained by seeking $u_h=\{u_0,u_b\}\in V_h$ satisfying both $u_b=
Q_b g$ on $\partial \Omega$ and the following equation:
\begin{equation}\label{wg}
a_s(u_h,v)=(f,v_0), \quad\forall\ v=\{v_0,v_b\}\in V_h^0.
\end{equation}
\end{algorithm}

Note that the system (\ref{wg}) is symmetric and positive definite
for any parameter value of $\rho>0$.
\bigskip

Next, we justify the well-postedness of the scheme (\ref{wg}). For
any $v\in V_h$, let
\begin{equation}\label{3barnorm}
\3bar v\3bar:=\sqrt{a_s(v,v)}.
\end{equation}
It is not hard to see that $\3bar\cdot\3bar$ defines a semi-norm in
the finite element space $V_h$. We claim that this semi-norm becomes
to be a full norm in the finite element space $V_h^0$. It suffices
to check the positivity property for $\3bar\cdot\3bar$. To this end,
assume that $v\in V_h^0$ and $\3bar v\3bar=0$. It follows that
\[
(a\nabla_w v,\nabla_w v)+\rho\sum_{T\in\T_h} h_T^{-1}\langle
Q_bv_0-v_b, Q_bv_0-v_b\rangle_\pT=0,
\]
which implies that $\nabla_w v=0$ on each element $T$ and
$Q_bv_0=v_b$ on $\pT$. It follows from $\nabla_w v=0$ and
(\ref{dwd-2}) that for any $q\in [P_{k-1}(T)]^d$
\begin{eqnarray*}
0&=&(\nabla_w v,q)_T\\
&=&(\nabla v_0,q)_T-\langle v_0-v_b,q\cdot\bn\rangle_\pT\\
&=&(\nabla v_0,q)_T-\langle Q_bv_0-v_b,q\cdot\bn\rangle_\pT\\
&=&(\nabla v_0,q)_T.
\end{eqnarray*}
Letting $q=\nabla v_0$ in the equation above yields $\nabla v_0=0$
on $T\in {\cal T}_h$. Thus, $v_0=const$ on every $T\in\T_h$. This,
together with the fact that $Q_bv_0=v_b$ on $\partial T$ and $v_b=0$
on $\partial\Omega$, implies that $v_0=v_b=0$.

\medskip
\begin{lemma}
The weak Galerkin finite element scheme (\ref{wg}) has a unique
solution.
\end{lemma}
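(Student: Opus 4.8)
The plan is to exploit the fact that (\ref{wg}) is a square linear system posed on the finite-dimensional space $V_h^0$, so that existence and uniqueness of a solution are equivalent; it therefore suffices to show that the associated homogeneous problem has only the trivial solution. First I would reduce to homogeneous data: if $u_h^{(1)}$ and $u_h^{(2)}$ are two solutions of (\ref{wg}) for the same $f$ and $g$, then their difference $w=u_h^{(1)}-u_h^{(2)}$ satisfies $w_b=0$ on $\partial\Omega$, hence $w\in V_h^0$, and $a_s(w,v)=0$ for all $v\in V_h^0$ by linearity of $a_s(\cdot,\cdot)$.

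Next, I would test this identity with $v=w$ to obtain $a_s(w,w)=\3bar w\3bar^2=0$. By the positivity property of the semi-norm $\3bar\cdot\3bar$ on $V_h^0$ — which was established in the discussion immediately preceding the lemma, using $\nabla_w w=0$, (\ref{dwd-2}), and $Q_b w_0=w_b$ on $\partial T$ to conclude $w_0=w_b=0$ — it follows that $w=0$. This proves uniqueness. Existence then follows because a homogeneous square linear system with only the trivial solution has an invertible coefficient matrix, so (\ref{wg}) admits a (unique) solution for every right-hand side $f$ and every boundary datum $g$; alternatively one may invoke the fact, already noted after (\ref{wg}), that the system is symmetric and positive definite for any $\rho>0$.

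There is essentially no obstacle here: the only substantive ingredient is the coercivity of $\3bar\cdot\3bar$ on $V_h^0$, which is available from the preceding paragraph, and everything else is routine finite-dimensional linear algebra. The single point worth stating carefully is the passage from nonhomogeneous to homogeneous boundary data, i.e. verifying that the difference of two solutions genuinely lies in $V_h^0$ so that the norm argument applies to it.
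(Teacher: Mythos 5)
Your proposal is correct and follows essentially the same route as the paper: reduce to the difference $w$ of two solutions, observe $w\in V_h^0$, test with $v=w$ to get $\3bar w\3bar^2=0$, and invoke the positivity of $\3bar\cdot\3bar$ on $V_h^0$ established just before the lemma. Your explicit remark that existence follows from uniqueness for a finite-dimensional square system is a small but welcome addition that the paper leaves implicit.
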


\smallskip

\begin{proof}
If $u_h^{(1)}$ and $u_h^{(2)}$ are two solutions of (\ref{wg}), then
$e_h=u_h^{(1)}-u_h^{(2)}$ would satisfy the following equation
$$
a_s(e_h,v)=0,\qquad\forall v\in V_h^0.
$$
Note that $e_h\in V_h^0$. Then by letting $v=e_h$ in the above
equation we arrive at
$$
\3bar e_h\3bar^2 = a_s(e_h, e_h)=0.
$$
It follows that $e_h\equiv 0$, or equivalently, $u_h^{(1)}\equiv
u_h^{(2)}$. This completes the proof of the lemma.
\end{proof}
\medskip

\section{Mass Conservation}\label{Section:wg-massconservation}

The second order elliptic equation (\ref{pde}) can be rewritten in a
conservative form as follows:
$$
\nabla \cdot q = f, \quad q=-a\nabla u.
$$
Let $T$ be any control volume. Integrating the first equation over
$T$ yields the following integral form of mass conservation:
\begin{equation}\label{conservation.01}
\int_{\partial T} q\cdot \bn ds = \int_T f dT.
\end{equation}
We claim that the numerical approximation from the weak Galerkin
finite element method (\ref{wg}) for (\ref{pde}) retains the mass
conservation property (\ref{conservation.01}) with an appropriately
defined numerical flux $q_h$. To this end, for any given $T\in {\cal
T}_h$, we chose in (\ref{wg}) a test function $v=\{v_0, v_b=0\}$ so
that $v_0=1$ on $T$ and $v_0=0$ elsewhere. It follows from
(\ref{wg}) that
\begin{equation}\label{mass-conserve.08}
\int_T a\nabla_{w} u_h\cdot \nabla_{w}v dT +\rho
h_T^{-1}\int_{\partial T} (Q_bu_0-u_b)ds = \int_T f dT.
\end{equation}
Let $\bbQ_h$ be the local $L^2$ projection onto the gradient space
$[P_{k-1}(T)]^d$. Using the definition (\ref{dwd}) for $\nabla_{w}v$
one arrives at
\begin{eqnarray}
\int_T a\nabla_{w} u_h\cdot \nabla_{w}v dT &=& \int_T
\bbQ_h(a\nabla_{w} u_h)\cdot \nabla_{w}v dT \nonumber\\
&=& - \int_T \nabla\cdot \bbQ_h(a\nabla_{w} u_h) dT \nonumber\\
&=& - \int_{\partial T} \bbQ_h(a\nabla_{w}u_h)\cdot\bn ds.
\label{conserv.88}
\end{eqnarray}
Substituting (\ref{conserv.88}) into (\ref{mass-conserve.08}) yields
\begin{equation}\label{mass-conserve.09}
\int_{\partial T} \left\{-\bbQ_h\left(a\nabla_{w}u_h\right)+\rho
h_T^{-1}(Q_bu_0-u_b)\bn\right\}\cdot\bn ds = \int_T f dT,
\end{equation}
which indicates that the weak Galerkin method conserves mass with a
numerical flux given by
$$
q_h = - \bbQ_h\left(a\nabla_{w}u_h\right)+\rho
h_T^{-1}(Q_bu_0-u_b)\bn.
$$

Next, we verify that the normal component of the numerical flux,
namely $q_h\cdot\bn$, is continuous across the edge of each element
$T$. To this end, let $e$ be an interior edge/face shared by two
elements $T_1$ and $T_2$. Choose a test function $v=\{v_0,v_b\}$ so
that $v_0\equiv 0$ and $v_b=0$ everywhere except on $e$. It follows
from (\ref{wg}) that
\begin{eqnarray}\label{mass-conserve.108}
\int_{T_1\cup T_2} a\nabla_{w} u_h\cdot \nabla_{w}v dT & & -\rho
h_{T_1}^{-1}\int_{\partial T_1\cap e} (Q_bu_0-u_b)|_{T_1}v_bds \\
& &- \rho h_{T_2}^{-1}\int_{\partial T_2\cap e}
(Q_bu_0-u_b)|_{T_2}v_bds\nonumber\\
& & =0.\nonumber
\end{eqnarray}
Using the definition of weak gradient (\ref{dwd}) we obtain
\begin{eqnarray*}
\int_{T_1\cup T_2} a\nabla_{w} u_h\cdot \nabla_{w}v dT&=&
\int_{T_1\cup T_2} \bbQ_h(a\nabla_{w} u_h)\cdot \nabla_{w}v dT\\
&=& \int_e\left(\bbQ_h(a\nabla_{w} u_h)|_{T_1}\cdot\bn_1 +
\bbQ_h(a\nabla_{w} u_h)|_{T_2}\cdot\bn_2\right)v_b ds,
\end{eqnarray*}
where $\bn_i$ is the outward normal direction of $T_i$ on the edge
$e$. Note that $\bn_1+\bn_2=0$. Substituting the above equation into
(\ref{mass-conserve.108}) yields
\begin{eqnarray*}
\int_e\left(-\bbQ_h(a\nabla_{w} u_h)|_{T_1}+\rho
h_{T_1}^{-1}(Q_bu_0-u_b)|_{T_1}\bn_1\right)\cdot\bn_1 v_bds\\
=-\int_e \left(-\bbQ_h(a\nabla_{w} u_h)|_{T_2}+\rho h_{T_2}^{-1}
(Q_bu_0-u_b)|_{T_2}\bn_2\right)\cdot\bn_2 v_bds,
\end{eqnarray*}
which shows the continuity of the numerical flux $q_h$ in the normal
direction.

\section{Some Technical Estimates}\label{Section:L2projections}

This section shall present some technical results useful for the
forthcoming error analysis. The first one is a trace inequality
established in \cite{wy-m} for functions on general shape regular
partitions. More precisely, let $T$ be an element with $e$ as an
edge. For any function $\varphi\in H^1(T)$, the following trace
inequality holds true (see \cite{wy-m} for details):
\begin{equation}\label{trace}
\|\varphi\|_{e}^2 \leq C \left( h_T^{-1} \|\varphi\|_T^2 + h_T
\|\nabla \varphi\|_{T}^2\right).
\end{equation}

Another useful result is a commutativity property for some
projection operators.
\begin{lemma}
Let $Q_h$ and $\bbQ_h$ be the $L^2$ projection operators defined in
previous sections. Then, on each element $T\in\T_h$, we have the
following commutative property
\begin{equation}\label{key}
\nabla_w (Q_h \phi) = \bbQ_h (\nabla \phi),\quad\forall \phi\in
H^1(T).
\end{equation}
\end{lemma}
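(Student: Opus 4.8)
The plan is to verify the defining equation of the discrete weak gradient for $\nabla_w(Q_h\phi)$ and show it coincides with that satisfied by $\bbQ_h(\nabla\phi)$. Since both sides live in $[P_{k-1}(T)]^d$ and the discrete weak gradient is characterized uniquely by testing against all $q\in[P_{k-1}(T)]^d$, it suffices to show that for every such $q$,
\[
(\nabla_w(Q_h\phi),q)_T = (\bbQ_h(\nabla\phi),q)_T.
\]
First I would expand the left side using the definition (\ref{dwd}) with $v=Q_h\phi=\{Q_0\phi,Q_b\phi\}$, giving
\[
(\nabla_w(Q_h\phi),q)_T = -(Q_0\phi,\nabla\cdot q)_T + \langle Q_b\phi, q\cdot\bn\rangle_\pT.
\]

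Next I would use the defining property of the $L^2$ projections to remove the projections from the test terms: since $\nabla\cdot q\in P_{k-1}(T)\subset P_k(T)$, we have $(Q_0\phi,\nabla\cdot q)_T=(\phi,\nabla\cdot q)_T$; and since $q\cdot\bn|_e\in P_{k-1}(e)$, we have $\langle Q_b\phi,q\cdot\bn\rangle_e=\langle\phi,q\cdot\bn\rangle_e$ on each edge $e\in\pT$. Here $Q_b$ acts on $\phi|_{\partial T}$, which is the trace of $\phi$; this is exactly the point where the polynomial reduction $v_b\in P_{k-1}(e)$ meshes with the gradient space $[P_{k-1}(T)]^d$, since the normal traces of degree-$(k-1)$ vectors are degree-$(k-1)$ polynomials on edges. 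Thus
\[
(\nabla_w(Q_h\phi),q)_T = -(\phi,\nabla\cdot q)_T + \langle\phi,q\cdot\bn\rangle_\pT = (\nabla\phi,q)_T,
\]
the last equality being ordinary integration by parts for $\phi\in H^1(T)$ and $q\in[P_{k-1}(T)]^d\subset H(\mathrm{div},T)$. Finally, since $q\in[P_{k-1}(T)]^d$ is precisely the space onto which $\bbQ_h$ projects, $(\nabla\phi,q)_T=(\bbQ_h(\nabla\phi),q)_T$, which completes the chain.

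The argument is essentially a bookkeeping exercise in matching polynomial degrees, so there is no deep obstacle; the one thing to be careful about is confirming that the degree of $q\cdot\bn$ restricted to an edge does not exceed $k-1$, so that $Q_b$ can legitimately be dropped against it — this is what makes the specific pairing $(P_k(T),P_{k-1}(e),[P_{k-1}(T)]^d)$ work and is the crux of why the commutativity holds at all. I would also note at the outset that $\nabla\cdot q$ has degree at most $k-1$, which is what allows $Q_0$ (projection onto $P_k(T)$) to be dropped. With these two observations in place, the identity follows immediately from the definitions, and I would present it as the short computation above.
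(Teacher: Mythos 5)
Your proof is correct and follows exactly the same route as the paper's: expand $(\nabla_w(Q_h\phi),q)_T$ via the defining equation (\ref{dwd}), drop the projections $Q_0$ and $Q_b$ against the test terms, integrate by parts, and invoke the definition of $\bbQ_h$. The only difference is that you make explicit the degree bookkeeping ($\nabla\cdot q\in P_{k-1}(T)\subset P_k(T)$ and $q\cdot\bn|_e\in P_{k-1}(e)$ on each flat edge/face) that the paper leaves implicit, which is a worthwhile clarification but not a different argument.
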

\begin{proof}
Using (\ref{dwd}), the integration by
parts and the definitions of $Q_h$ and $\bbQ_h$, we have that for
any $\tau\in [P_{k-1}(T)]^d$
\begin{eqnarray*}
(\nabla_w (Q_h \phi),\tau)_T &=& -(Q_0 \phi,\nabla\cdot\tau)_T
+\langle Q_b \phi,\tau\cdot\bn\rangle_{\pT}\\
&=&-(\phi,\nabla\cdot\tau)_T + \langle \phi,\tau\cdot\bn\rangle_{\partial T}\\
&=&(\nabla \phi,\tau)_T\\
&=&(\bbQ_h(\nabla\phi),\tau)_T,
\end{eqnarray*}
which implies the desired identity (\ref{key}).
\end{proof}

\medskip
The following lemma provides some estimates for the projection
operators $Q_h$ and $\bbQ_h$. Observe that the underlying mesh
$\T_h$ is assumed to be sufficiently general to allow polygons or
polyhedra. A proof of the lemma can be found in \cite{wy-m}. It
should be pointed out that the proof of the lemma requires some
non-trivial technical tools in analysis, which have also been
established in \cite{wy-m}.

\begin{lemma}
Let $\T_h$ be a finite element partition of $\Omega$ that is shape
regular. Then, for any $\phi\in H^{k+1}(\Omega)$, we have
\begin{eqnarray}
&&\sum_{T\in\T_h} \|\phi-Q_0\phi\|_{T}^2 +\sum_{T\in\T_h}h_T^2
\|\nabla(\phi-Q_0\phi)\|_{T}^2\le C h^{2(k+1)}
\|\phi\|^2_{k+1},\label{Qh}\\
&&\sum_{T\in\T_h} \|a(\nabla\phi-\bbQ_h(\nabla\phi))\|^2_{T} \le
Ch^{2k} \|\phi\|^2_{k+1}.\label{Rh}
\end{eqnarray}
Here and in what follows of this paper, $C$ denotes a generic
constant independent of the meshsize $h$ and the functions in the
estimates.
\end{lemma}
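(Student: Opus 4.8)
The plan is to reduce both inequalities to purely local polynomial approximation estimates on a single element $T\in\T_h$ and then sum over all elements using $h_T\le h$ together with $\sum_{T\in\T_h}|\phi|_{k+1,T}^2\le\|\phi\|_{k+1}^2$. The starting point is that $Q_0\phi$ is the best $L^2(T)$-approximation of $\phi$ from $P_k(T)$ and $\bbQ_h(\nabla\phi)$ is the best $L^2(T)$-approximation of $\nabla\phi$ from $[P_{k-1}(T)]^d$, so that
\[
\|\phi-Q_0\phi\|_T\le \inf_{p\in P_k(T)}\|\phi-p\|_T,\qquad
\|\nabla\phi-\bbQ_h(\nabla\phi)\|_T\le \inf_{q\in[P_{k-1}(T)]^d}\|\nabla\phi-q\|_T.
\]
For \eqref{Rh} this essentially finishes the argument: a Bramble--Hilbert/Deny--Lions estimate on a shape-regular polytopal element $T$ gives $\inf_{q\in[P_{k-1}(T)]^d}\|\nabla\phi-q\|_T\le Ch_T^{k}|\phi|_{k+1,T}$, and multiplying by the bounded coefficient $a$ and summing over $T$ yields the $h^{2k}$ bound.

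For the first term of \eqref{Qh}, the same Bramble--Hilbert estimate applied directly to $\phi$ gives $\inf_{p\in P_k(T)}\|\phi-p\|_T\le Ch_T^{k+1}|\phi|_{k+1,T}$. The gradient term $\|\nabla(\phi-Q_0\phi)\|_T$ is the only point that needs a trick, since $\nabla$ does not commute with $Q_0$: I would fix a polynomial $p\in P_k(T)$ realizing (up to a constant) the Bramble--Hilbert estimate simultaneously in $L^2(T)$ and $H^1(T)$, and write
\[
\|\nabla(\phi-Q_0\phi)\|_T\le \|\nabla(\phi-p)\|_T+\|\nabla(p-Q_0\phi)\|_T.
\]
The first term is bounded by $Ch_T^{k}|\phi|_{k+1,T}$; for the second I would use an inverse inequality $\|\nabla(p-Q_0\phi)\|_T\le Ch_T^{-1}\|p-Q_0\phi\|_T$ on the element $T$ together with $\|p-Q_0\phi\|_T\le\|p-\phi\|_T+\|\phi-Q_0\phi\|_T\le Ch_T^{k+1}|\phi|_{k+1,T}$. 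Multiplying by $h_T$ and summing then gives the $h^{2(k+1)}$ bound.

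The real content, and the \emph{main obstacle}, is that two technical ingredients must hold uniformly over the polytopal partition: (i) a Bramble--Hilbert/Deny--Lions approximation estimate on a general shape-regular polygonal or polyhedral element, and (ii) an inverse inequality for polynomials on such an element. On classical affine simplicial meshes both follow immediately from scaling to a single fixed reference element, but here no such reference element exists. The resolution, carried out in \cite{wy-m}, is to exploit the shape-regularity hypotheses on $\T_h$ — each $T$ being decomposable into a uniformly bounded number of simplices that are star-shaped with respect to balls of radius comparable to $h_T$ — to construct an averaged Taylor polynomial on $T$ in the spirit of the Dupont--Scott theory, for which the Bramble--Hilbert estimate holds with a constant depending only on the shape-regularity parameters; the inverse inequality is obtained by an analogous decomposition. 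Once these polytopal tools are available, the estimates above are routine, so in the write-up I would invoke the relevant lemmas of \cite{wy-m} rather than reproduce the underlying polytopal analysis.
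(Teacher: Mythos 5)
Your proposal is correct: the reduction to local best-approximation by the $L^2$ projections, the Bramble--Hilbert estimate on each element, and the inverse-inequality trick for bounding $\|\nabla(\phi-Q_0\phi)\|_T$ are all sound, and you correctly identify that the only non-routine content is the validity of these two tools on general shape-regular polytopal elements. The paper itself gives no proof of this lemma---it defers entirely to \cite{wy-m}, which is exactly where you place the burden---so your write-up is consistent with, and more explicit than, the paper's treatment.
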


\medskip
In the finite element space $V_h$, we introduce a discrete $H^1$
semi-norm as follows:
\begin{equation}\label{March24-2013-discreteH1norm}
\|v\|_{1,h} = \left( \sum_{T\in\T_h}\left(\|\nabla
v_0\|_T^2+h_T^{-1} \| Q_bv_0-v_b\|_\pT\right) \right)^{\frac12}.
\end{equation}
The following lemma indicates that $\|\cdot\|_{1,h}$ is equivalent
to the trip-bar norm (\ref{3barnorm}).

\begin{lemma} There exist two positive constants $C_1$ and $C_2$ such
that for any $v=\{v_0,v_b\}\in V_h$, we have
\begin{equation}\label{happy}
C_1 \|v\|_{1,h}\le \3bar v\3bar \leq C_2 \|v\|_{1,h}.
\end{equation}
\end{lemma}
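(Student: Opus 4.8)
The goal is to show the norm equivalence $C_1 \|v\|_{1,h} \le \3bar v\3bar \le C_2 \|v\|_{1,h}$ for all $v \in V_h$. Recall $\3bar v\3bar^2 = a(v,v) + s(v,v) = \sum_T (a\nabla_w v, \nabla_w v)_T + \rho \sum_T h_T^{-1} \|Q_b v_0 - v_b\|_{\partial T}^2$, and $\|v\|_{1,h}^2 = \sum_T (\|\nabla v_0\|_T^2 + h_T^{-1}\|Q_b v_0 - v_b\|_{\partial T})$. The stabilization terms match (up to the factor $\rho$), so the whole problem reduces to comparing $\sum_T \|\nabla_w v\|_T^2$ with $\sum_T \|\nabla v_0\|_T^2$ on each element, given control of the jump term $h_T^{-1}\|Q_b v_0 - v_b\|_{\partial T}^2$.

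The plan is to work element by element using the alternative formula (\ref{dwd-2}): $(\nabla_w v, q)_T = (\nabla v_0, q)_T + \langle v_b - v_0, q\cdot\bn\rangle_{\partial T}$ for all $q \in [P_{k-1}(T)]^d$. For the upper bound, I would set $q = \nabla_w v \in [P_{k-1}(T)]^d$ in (\ref{dwd-2}), giving $\|\nabla_w v\|_T^2 = (\nabla v_0, \nabla_w v)_T + \langle v_b - v_0, \nabla_w v \cdot \bn\rangle_{\partial T}$. Since $\nabla_w v \cdot \bn$ is a polynomial on $\partial T$, I can insert $Q_b$ in the boundary term (replacing $v_0$ by $Q_b v_0$), then apply Cauchy--Schwarz, the trace inequality (\ref{trace}) applied to the polynomial $\nabla_w v$ (whose inverse-type bound gives $\|\nabla_w v\|_{\partial T}^2 \le C h_T^{-1}\|\nabla_w v\|_T^2$), and an inverse inequality to absorb the $h_T\|\nabla(\nabla_w v)\|_T^2$ piece. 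This yields $\|\nabla_w v\|_T \le C(\|\nabla v_0\|_T + h_T^{-1/2}\|Q_b v_0 - v_b\|_{\partial T})$, hence $a(v,v) \le C\|v\|_{1,h}^2$; combined with the ellipticity/boundedness of $a$ and the matching stabilization term, this gives the right inequality $\3bar v\3bar \le C_2\|v\|_{1,h}$.

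For the lower bound I would go the other direction: choose $q = \nabla v_0 \in [P_{k-1}(T)]^d$ in (\ref{dwd-2}) to get $\|\nabla v_0\|_T^2 = (\nabla_w v, \nabla v_0)_T - \langle v_b - v_0, \nabla v_0 \cdot\bn\rangle_{\partial T} = (\nabla_w v, \nabla v_0)_T + \langle Q_b v_0 - v_b, \nabla v_0\cdot\bn\rangle_{\partial T}$, again using that $\nabla v_0 \cdot\bn$ is polynomial on each edge so $v_0$ can be replaced by $Q_b v_0$. Then Cauchy--Schwarz, the trace inequality plus inverse inequality on the polynomial $\nabla v_0$ (to bound $\|\nabla v_0\|_{\partial T}^2 \le C h_T^{-1}\|\nabla v_0\|_T^2$), and Young's inequality to absorb $\|\nabla v_0\|_T$ on the left give $\|\nabla v_0\|_T^2 \le C(\|\nabla_w v\|_T^2 + h_T^{-1}\|Q_b v_0 - v_b\|_{\partial T}^2)$. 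Summing over $T$, using the ellipticity lower bound on $a$ in $a(v,v)$, and adding the stabilization term produces $\|v\|_{1,h}^2 \le C\,\3bar v\3bar^2$, i.e. the left inequality.

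The main obstacle — really the only nonroutine point — is the handling of the boundary terms: one must be careful that $Q_b v_0$ legitimately replaces $v_0$ there, which works precisely because the test vector ($\nabla_w v$ or $\nabla v_0$) has a polynomial normal trace of degree $\le k-1$ on each edge $e$, matching the range of $Q_b$; and one must invoke the trace inequality (\ref{trace}) together with a standard inverse inequality valid on the general shape-regular polytopal elements to convert $\|\cdot\|_{\partial T}$ bounds on polynomials into $h_T^{-1}\|\cdot\|_T$ bounds. Everything else is Cauchy--Schwarz, Young's inequality, the ellipticity assumption (\ref{ellipticity}) to pass between $a(v,v)$ and $\sum_T\|\nabla_w v\|_T^2$, and summation over the partition.
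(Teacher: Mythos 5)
Your proposal is correct and follows essentially the same route as the paper: both use the identity (\ref{dwd-2}) with the insertion of $Q_b$ in the boundary term (valid since the normal trace of the test vector lies in $P_{k-1}(e)$), take $q=\nabla_w v$ for the upper bound and $q=\nabla v_0$ for the lower bound, and close with Cauchy--Schwarz, the trace inequality (\ref{trace}), and an inverse inequality. No substantive differences.
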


\begin{proof}
For any $v=\{v_0,v_b\}\in V_h$, it follows from the definition of
weak gradient (\ref{dwd-2}) and $Q_b$ that
\begin{eqnarray}\label{March24-2013-01}
(\nabla_wv,q)_T=(\nabla v_0,q)_T+\l v_b-Q_bv_0,
q\cdot\bn\r_\pT,\quad \forall q\in [P_{k-1}(T)]^d.
\end{eqnarray}
By letting $q=\nabla_w v$ in (\ref{March24-2013-01}) we arrive at
\begin{eqnarray*}
(\nabla_wv,\nabla_w v)_T=(\nabla v_0,\nabla_w v)_T+\l v_b-Q_bv_0,
\nabla_w v\cdot\bn\r_\pT.
\end{eqnarray*}
From the trace inequality (\ref{trace}) and the inverse inequality
we have
\begin{eqnarray*}
(\nabla_wv,\nabla_w v)_T &\le& \|\nabla v_0\|_T \|\nabla_w v\|_T+ \|
Q_bv_0-v_b\|_\pT \|\nabla_w v\|_\pT\\
&\le& \|\nabla v_0\|_T \|\nabla_w v\|_T+ Ch_T^{-1/2}\|
Q_bv_0-v_b\|_\pT \|\nabla_w v\|_T\\
\end{eqnarray*}
Thus,
$$
\|\nabla_w v\|_T \le C \left(\|\nabla v_0\|_T^2 +h_T^{-1}\|
Q_bv_0-v_b\|_\pT^2\right)^{\frac12},
$$
which verifies the upper bound of $\3bar v\3bar$. As to the lower
bound, we chose $q=\nabla v_0$ in (\ref{March24-2013-01}) to obtain
\begin{eqnarray*}
(\nabla_wv,\nabla v_0)_T=(\nabla v_0,\nabla v_0)_T+\l v_b-Q_bv_0,
\nabla v_0\cdot\bn\r_\pT.
\end{eqnarray*}
Thus, from the trace an inverse inequality we have
$$
\|\nabla v_0\|_T^2 \leq \|\nabla_w v\|_T \|\nabla v_0\|_T
+Ch_T^{-1/2}\| Q_bv_0-v_b\|_\pT \|\nabla v_0\|_T.
$$
This leads to
$$
\|\nabla v_0\|_T \leq C\left(\|\nabla_w v\|_T^2 +Ch_T^{-1}\|
Q_bv_0-v_b\|_\pT^2\right)^{\frac12},
$$
which verifies the lower bound for $\3bar v\3bar$. Collectively,
they complete the proof of the lemma.
\end{proof}

\bigskip

\begin{lemma} Assume that $\T_h$ is shape regular. Then for any $w\in H^{k+1}(\Omega)$ and
$v=\{v_0,v_b\}\in V_h$, we have
\begin{eqnarray}
|s(Q_hw, v)|&\le&
Ch^k\|w\|_{k+1}\3bar v\3bar,\label{mmm1}\\
\left|\ell_w(v)\right| &\leq& C h^k\|w\|_{k+1} \3bar
v\3bar,\label{mmm2}
\end{eqnarray}
where  $\ell_w(v)=\sum_{T\in\T_h} \langle a(\nabla w-\bbQ_h\nabla w)\cdot\bn,\;
v_0-v_b\rangle_\pT$.
\end{lemma}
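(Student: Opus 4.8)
The plan is to estimate the two quantities separately, but both rely on the same combination of tools: the trace inequality \eqref{trace}, the Cauchy--Schwarz inequality, the approximation bounds \eqref{Qh}--\eqref{Rh}, and the norm equivalence \eqref{happy}.

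For \eqref{mmm1}, write $s(Q_hw,v)=\rho\sum_{T\in\T_h}h_T^{-1}\langle Q_b(Q_0w)-Q_bw,\;Q_bv_0-v_b\rangle_{\pT}$. Using $Q_bw=Q_b(w)$ and the observation that $Q_b(Q_0w)-Q_b w = Q_b(Q_0w-w)$, I would apply Cauchy--Schwarz on each $\pT$ to get $|s(Q_hw,v)|\le \rho\big(\sum_T h_T^{-1}\|Q_b(Q_0w-w)\|_{\pT}^2\big)^{1/2}\big(\sum_T h_T^{-1}\|Q_bv_0-v_b\|_{\pT}^2\big)^{1/2}$. The second factor is bounded by $C\|v\|_{1,h}\le C\3bar v\3bar$ via \eqref{happy} (or directly by $\3bar v\3bar$). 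For the first factor, since $Q_b$ is an $L^2$ projection on $e$ it is stable, so $\|Q_b(Q_0w-w)\|_{\pT}\le \|Q_0w-w\|_{\pT}$; then the trace inequality \eqref{trace} applied to $\varphi=Q_0w-w\in H^1(T)$ gives $\|Q_0w-w\|_{e}^2\le C(h_T^{-1}\|Q_0w-w\|_T^2+h_T\|\nabla(Q_0w-w)\|_T^2)$, and summing over edges and then over $T$, together with \eqref{Qh}, yields $\sum_T h_T^{-1}\|Q_0w-w\|_{\pT}^2\le Ch^{2k}\|w\|_{k+1}^2$. Combining the two factors gives \eqref{mmm1}.

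For \eqref{mmm2}, set $\eta = a(\nabla w-\bbQ_h\nabla w)$ and note $\ell_w(v)=\sum_T\langle \eta\cdot\bn,\,v_0-v_b\rangle_{\pT}$. The subtlety here is that $v_0-v_b$ is not directly controlled by $\|v\|_{1,h}$; only $Q_bv_0-v_b$ is. So I would insert $Q_b v_0$: on each edge $e$, since $\eta\cdot\bn|_e$ need not be a polynomial of degree $\le k-1$ we cannot drop the projection for free, but we can still split $v_0-v_b = (v_0-Q_bv_0)+(Q_bv_0-v_b)$. For the second piece, Cauchy--Schwarz plus the trace and inverse inequalities applied to $\eta$ give $|\sum_T\langle\eta\cdot\bn,Q_bv_0-v_b\rangle_\pT|\le C\big(\sum_T(h_T^{-1}\|\eta\|_T^2+h_T\|\nabla\eta\|_T^2)\big)^{1/2}\big(\sum_T h_T^{-1}\|Q_bv_0-v_b\|_\pT^2\big)^{1/2}$; the first factor is $\le Ch^{k-1/2}\cdot h^{-1/2}\cdots$ — more carefully, by \eqref{Rh} combined with an inverse estimate on the polynomial-projected part of $\eta$ one obtains $\sum_T h_T^{-1}\|\eta\|_\pT^2\le Ch^{2k}\|w\|_{k+1}^2$, and the second factor is $\le C\3bar v\3bar$. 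For the first piece, $\langle\eta\cdot\bn,\,v_0-Q_bv_0\rangle_e$: since $Q_b$ is the $L^2$ projection onto $P_{k-1}(e)$, we have $\langle \zeta,\,v_0-Q_bv_0\rangle_e=\langle\zeta-Q_b\zeta,\,v_0\rangle_e$ for $\zeta=\eta\cdot\bn$, hence this equals $\langle (\eta\cdot\bn)-Q_b(\eta\cdot\bn),\,v_0-Q_bv_0\rangle_e$; bounding $\|(\eta\cdot\bn)-Q_b(\eta\cdot\bn)\|_e\le\|\eta\cdot\bn\|_e$ and using the trace inequality on $\eta$ again, together with $\|v_0-Q_bv_0\|_e\le C h_T^{1/2}\|\nabla v_0\|_T + \cdots$ controlled by $\|v\|_{1,h}$, closes the estimate. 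Collecting terms gives \eqref{mmm2}.

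The main obstacle will be the bookkeeping in \eqref{mmm2}: one must carefully exploit that $Q_b$ is an orthogonal projection on each edge to convert the uncontrolled quantity $v_0-v_b$ into combinations of $\nabla v_0$ and $Q_bv_0-v_b$, and one must be careful to apply the trace inequality only to $H^1(T)$ functions (so $\eta$ must be handled on whole elements, not just on faces). Everything else — the two applications of \eqref{trace}, Cauchy--Schwarz, and the appeals to \eqref{Qh}, \eqref{Rh}, \eqref{happy} — is routine once the splitting is set up correctly.
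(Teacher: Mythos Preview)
Your proposal is correct and follows essentially the same route as the paper: for \eqref{mmm1} both arguments reduce $Q_b(Q_0w)-Q_bw$ to $Q_0w-w$ and then invoke the trace inequality together with \eqref{Qh}, and for \eqref{mmm2} both control $v_0-v_b$ through the split $(v_0-Q_bv_0)+(Q_bv_0-v_b)$ with $\|v_0-Q_bv_0\|_\pT\le Ch_T^{1/2}\|\nabla v_0\|_T$. The only difference is that the paper applies Cauchy--Schwarz to $\langle\eta\cdot\bn,\,v_0-v_b\rangle_\pT$ \emph{before} splitting, which bypasses the orthogonality manipulation you set up for the $(v_0-Q_bv_0)$ piece and the scaling hiccup you flagged and then corrected; either ordering closes the estimate.
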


\medskip

\begin{proof}
Using the definition of $Q_b$, (\ref{trace}), and (\ref{Qh}), we
obtain
\begin{eqnarray*}
|s(Q_hw, v)|&=&
\left|\sum_{T\in\T_h} h_T^{-1}\langle Q_b(Q_0w)-Q_bw,\; Q_bv_0-v_b\rangle_\pT\right|\\
 &=&\left|\sum_{T\in\T_h} h_T^{-1}\langle Q_0w-w,\; Q_bv_0-v_b\rangle_\pT\right|\\
&\le& C\left(\sum_{T\in\T_h}(h_T^{-2}\|Q_0w-w\|_T^2+
\|\nabla (Q_0w-w)\|_T^2)\right)^{\frac12}\cdot\\ &&
\left(\sum_{T\in\T_h}h_T^{-1}\|Q_bv_0-v_b\|^2_{\pT}\right)^{\frac12}\\
&\le& Ch^k\|w\|_{k+1}\3bar v\3bar.
\end{eqnarray*}

As to (\ref{mmm2}), it follows from the Cauchy-Schwarz inequality,
the trace inequality (\ref{trace}) and the estimate (\ref{Rh}) that
\begin{eqnarray}\label{happy1}
|\ell_w(v)|&=&\left|\sum_{T\in\T_h}\langle a(\nabla w-\bbQ_h\nabla
w)\cdot\bn, v_0-v_b\rangle_\pT\right|\\
&\le & C \sum_{T\in\T_h}\|a(\nabla w-\bbQ_h\nabla w)\|_{\pT}
\|v_0-v_b\|_\pT\nonumber\\
&\le & C \left(\sum_{T\in\T_h}h_T\|a(\nabla w-\bbQ_h\nabla
w)\|_{\pT}^2\right)^{\frac12}
\left(\sum_{T\in\T_h}h_T^{-1}\|v_0-v_b\|_\pT^2\right)^{\frac12}\nonumber\\
&\le &
Ch^k\|w\|_{k+1}\left(\sum_{T\in\T_h}h_T^{-1}\|v_0-v_b\|_\pT^2\right)^{\frac12}.\nonumber
\end{eqnarray}
Using the trace inequality (\ref{trace}) and the approximation
property of the $L^2$ projection operator we obtain
\begin{eqnarray*}
\|v_0-v_b\|_\pT &\leq& \|v_0-Q_b v_0\|_\pT + \|Q_b v_0-v_b\|_\pT\\
&\le& C h_T^{1/2}\|\nabla v_0\|_T +\|Q_b v_0-v_b\|_\pT.
\end{eqnarray*}
Substituting the above inequality into (\ref{happy1}) yields
\begin{eqnarray}\label{happy2}
|\ell_w(v)| \leq Ch^k\|w\|_{k+1}\left(\sum_{T\in\T_h}\left\{\|\nabla
v_0\|_T^2 + h_T^{-1}\|Q_bv_0-v_b\|_\pT^2\right\}\right)^{\frac12},
\end{eqnarray}
which, along with the estimate (\ref{happy}), verifies the desired
estimate (\ref{mmm2}).
\end{proof}

\section{Error Analysis}\label{Section:error-analysis}
The goal of this section is to establish some error estimates for
the weak Galerkin finite element solution $u_h$ arising from (\ref{wg}).
The error will be measured in two natural norms: the triple-bar
norm as defined in (\ref{3barnorm}) and the standard $L^2$ norm. The
triple bar norm is essentially a discrete $H^1$ norm for the
underlying weak function.

For simplicity of analysis, we assume that the coefficient tensor
$a$ in (\ref{pde}) is a piecewise constant matrix with respect to
the finite element partition $\T_h$. The result can be extended to
variable tensors without any difficulty, provided that the tensor
$a$ is piecewise sufficiently smooth.

\subsection{Error equation}
Let $u_h=\{u_0,u_b\}\in V_h$ be the weak
Galerkin finite element solution arising from the numerical scheme
(\ref{wg}). Assume that the exact solution of
(\ref{pde})-(\ref{bc}) is given by $u$. The $L^2$
projection of $u$ in the finite element space $V_h$ is given by
$$
Q_h u=\{Q_0 u,Q_b u\}.
$$
Let
$$
e_h=\{e_0,e_b\}=\{Q_0u-u_0,Q_bu-u_b\}
$$
be the error between the WG finite element solution and the $L^2$
projection of the exact solution.

\begin{lemma}\label{Lemma:error-equation}
Let $e_h$  be the error of the weak Galerkin
finite element solution arising from (\ref{wg}). Then,
for any $v\in V_h^0$ we have
\begin{eqnarray}
a_s(e_h,v)=\ell_u(v)+ s(Q_hu,v),\label{ee}
\end{eqnarray}
where $\ell_u(v)=\sum_{T\in\T_h} \langle a(\nabla u-\bbQ_h\nabla
u)\cdot\bn,v_0-v_b\rangle_\pT$.
\end{lemma}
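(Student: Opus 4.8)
The plan is to derive the error equation by testing the PDE against a weak-Galerkin test function and comparing with the discrete scheme \eqref{wg}. First I would start from the variational form of \eqref{pde}: multiplying by $v_0$ for $v=\{v_0,v_b\}\in V_h^0$, integrating over each $T$, and using integration by parts, we get
\[
\sum_{T\in\T_h}(a\nabla u,\nabla v_0)_T - \sum_{T\in\T_h}\langle a\nabla u\cdot\bn, v_0-v_b\rangle_\pT = (f,v_0),
\]
where the boundary term involving $v_b$ is legitimately inserted because $\sum_T\langle a\nabla u\cdot\bn, v_b\rangle_\pT=0$: the exact flux $a\nabla u$ has continuous normal component across interior edges, $v_b$ is single-valued there, and $v_b=0$ on $\partial\Omega$.

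The key step is to rewrite $\sum_T(a\nabla u,\nabla v_0)_T$ in terms of the discrete weak gradient of $Q_h u$. Using the commutativity property \eqref{key}, $\nabla_w(Q_h u)=\bbQ_h(\nabla u)$ on each $T$, so
\[
(a\nabla_w(Q_h u),\nabla_w v)_T = (a\bbQ_h\nabla u,\nabla_w v)_T.
\]
On the other hand, applying \eqref{dwd-2} to $v$ with the test vector $q=a\bbQ_h\nabla u\in [P_{k-1}(T)]^d$ (here I use that $a$ is piecewise constant, so $a\bbQ_h\nabla u$ is indeed a vector polynomial of degree $k-1$),
\[
(a\bbQ_h\nabla u,\nabla_w v)_T = (a\bbQ_h\nabla u,\nabla v_0)_T + \langle v_b-v_0, a\bbQ_h\nabla u\cdot\bn\rangle_\pT.
\]
Since $\bbQ_h$ is the $L^2$ projection onto $[P_{k-1}(T)]^d$ and $a$ is constant on $T$, one has $(a\bbQ_h\nabla u,\nabla v_0)_T=(a\nabla u,\nabla v_0)_T$ when $\nabla v_0\in[P_{k-1}(T)]^d$ (which holds since $v_0\in P_k(T)$). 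Combining these identities yields
\[
\sum_{T\in\T_h}(a\nabla u,\nabla v_0)_T = a(Q_hu,v) + \sum_{T\in\T_h}\langle v_0-v_b, a\bbQ_h\nabla u\cdot\bn\rangle_\pT.
\]

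Substituting this back into the variational form and rearranging,
\[
a(Q_hu,v) + \sum_{T\in\T_h}\langle v_0-v_b, a(\bbQ_h\nabla u-\nabla u)\cdot\bn\rangle_\pT = (f,v_0) = a_s(u_h,v),
\]
the last equality by \eqref{wg}. The boundary sum is exactly $-\ell_u(v)$. Finally I add $s(Q_hu,v)$ to both sides to convert $a(Q_hu,v)$ into $a_s(Q_hu,v)$, and subtract $a_s(u_h,v)$, obtaining $a_s(Q_hu-u_h,v)=\ell_u(v)+s(Q_hu,v)$, i.e. $a_s(e_h,v)=\ell_u(v)+s(Q_hu,v)$, which is \eqref{ee}. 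I expect the main obstacle to be the bookkeeping in the two boundary-term manipulations: justifying that $\sum_T\langle a\nabla u\cdot\bn,v_b\rangle_\pT$ vanishes (needs $u$ smooth enough for flux continuity, e.g. $u\in H^{k+1}$ with $k\ge1$), and carefully tracking the replacement of $v_0$ by $Q_bv_0$ (or vice versa) against $v_b$ inside edge inner products using the definition of $Q_b$ and the fact that $a\bbQ_h\nabla u\cdot\bn$ restricted to each edge has degree $\le k-1$, so it is invariant under $Q_b$. Everything else is routine integration by parts plus invocations of \eqref{key}, \eqref{dwd-2}, and the piecewise-constant assumption on $a$.
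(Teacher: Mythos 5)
Your proposal is correct and follows essentially the same route as the paper: test the PDE with $v_0$, use the vanishing of $\sum_T\langle a\nabla u\cdot\bn, v_b\rangle_\pT$, invoke the commutativity identity \eqref{key} together with the definition of the discrete weak gradient (you use \eqref{dwd-2} directly where the paper uses \eqref{dwd} plus integration by parts, which is the same thing), exploit the piecewise-constant $a$ so that $a\bbQ_h\nabla u$ and $a\nabla v_0$ are admissible $[P_{k-1}(T)]^d$ test vectors, and then subtract the scheme \eqref{wg}. The only superfluous remark is your anticipated worry about swapping $v_0$ for $Q_bv_0$ on edges, which never actually arises in this lemma.
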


\begin{proof}
Testing (\ref{pde}) by using $v_0$ of $v=\{v_0,v_b\}\in V_h^0$ we
arrive at
\begin{equation}\label{m1}
\sum_{T\in\T_h}(a\nabla u,\nabla v_0)_T-\sum_{T\in\T_h} \langle
a\nabla u\cdot\bn,v_0-v_b\rangle_\pT=(f,v_0),
\end{equation}
where we have used the fact that $\sum_{T\in\T_h}\langle a\nabla
u\cdot\bn, v_b\rangle_\pT=0$. To deal with the term
$\sum_{T\in\T_h}(a\nabla u,\nabla v_0)_T$ in (\ref{m1}), we need the
following equation. For any $\phi\in H^1(T)$ and $v\in V_h$, it
follows from (\ref{key}), the definition of the discrete weak
gradient (\ref{dwd}), and the integration by parts that
\begin{eqnarray}
(a\nabla_w Q_h\phi,\nabla_w v)_T&=&(a \bbQ_h(\nabla\phi),\nabla_w v)_T\nonumber\\
&=& -(v_0,\nabla\cdot (a \bbQ_h\nabla\phi))_T+\langle v_b,(a \bbQ_h\nabla\phi)\cdot\bn\rangle_\pT\nonumber\\
&=&(\nabla v_0,a\bbQ_h\nabla\phi)_T-\langle v_0-v_b,(a\bbQ_h\nabla\phi)\cdot\bn\rangle_\pT\nonumber\\
&=&(a\nabla\phi,\nabla v_0)_T-\l (a\bbQ_h\nabla\phi)\cdot\bn,\
v_0-v_b\r_\pT.\label{j1}
\end{eqnarray}
By letting $\phi=u$ in (\ref{j1}),
we have from combining (\ref{j1}) and (\ref{m1}) that
\begin{eqnarray*}
\sum_{T\in\T_h} (a\nabla_w Q_hu,\nabla_w v)_T&=&(f,v_0)+
\sum_{T\in\T_h} \langle a(\nabla u-\bbQ_h\nabla
u)\cdot\bn,v_0-v_b\rangle_\pT\\
&=&(f,v_0)+\ell_u(v).
\end{eqnarray*}
Adding $s(Q_hu,v)$ to both sides of the above equation gives
\begin{equation}\label{j2}
a_s(Q_hu, v)=(f, v_0)+ \ell_u(v) +s(Q_hu,v).
\end{equation}
Subtracting (\ref{wg}) from (\ref{j2}) yields the following error
equation,
\begin{eqnarray*}
a_s(e_h,v)=\ell_u(v)+ s(Q_hu,v),\quad \forall v\in V_h^0.
\end{eqnarray*}
This completes the proof of the lemma.
\end{proof}

\subsection{Error estimates}
The error equation (\ref{ee}) can be used to derive the following
error estimate for the WG finite element solution.

\begin{theorem} Let $u_h\in V_h$ be the weak Galerkin finite element solution of the problem
(\ref{pde})-(\ref{bc}) arising from (\ref{wg}). Assume the exact solution $u\in H^{k+1}(\Omega)$. Then,
there exists a constant $C$ such that
\begin{equation}\label{err1}
\3bar u_h-Q_hu\3bar \le Ch^{k}\|u\|_{k+1}.
\end{equation}
\end{theorem}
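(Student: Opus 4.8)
The plan is to exploit the error equation \eqref{ee} of Lemma \ref{Lemma:error-equation} directly, by choosing $v=e_h$ as the test function. Since $e_h=\{e_0,e_b\}\in V_h^0$ (note $u_b=Q_bg$ on $\partial\Omega$, so $e_b=0$ there), this substitution is legitimate, and it yields
\[
\3bar e_h\3bar^2 = a_s(e_h,e_h) = \ell_u(e_h) + s(Q_hu,e_h).
\]
The whole estimate then reduces to bounding the two terms on the right by $Ch^k\|u\|_{k+1}\3bar e_h\3bar$, after which I divide through by $\3bar e_h\3bar$ (the case $\3bar e_h\3bar=0$ being trivial) to conclude \eqref{err1}.

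The bounds for both right-hand terms are already packaged in the lemma preceding the error analysis: applying \eqref{mmm2} with $w=u$ gives $|\ell_u(e_h)|\le Ch^k\|u\|_{k+1}\3bar e_h\3bar$, and applying \eqref{mmm1} with $w=u$ gives $|s(Q_hu,e_h)|\le Ch^k\|u\|_{k+1}\3bar e_h\3bar$. Hypothesis $u\in H^{k+1}(\Omega)$ is exactly what those estimates require. Adding the two bounds and combining with the displayed identity above gives $\3bar e_h\3bar^2\le Ch^k\|u\|_{k+1}\3bar e_h\3bar$, hence $\3bar e_h\3bar=\3bar u_h-Q_hu\3bar\le Ch^k\|u\|_{k+1}$.

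In short, there is really no main obstacle left at this stage: the substantive work — deriving the error equation and estimating $\ell_u$ and $s(Q_hu,\cdot)$ via the trace inequality \eqref{trace}, the projection estimates \eqref{Qh}--\eqref{Rh}, and the norm equivalence \eqref{happy} — has been done in the earlier lemmas. The only point requiring a moment's care is verifying that $e_h$ is an admissible test function, i.e. that $e_b$ vanishes on $\partial\Omega$, which follows from the boundary condition $u_b=Q_bg$ imposed in the scheme \eqref{wg} together with $Q_h$ reproducing $Q_bg$ on boundary edges. Everything else is a two-line Cauchy–Schwarz-style cancellation.
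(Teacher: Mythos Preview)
Your proof is correct and follows exactly the same route as the paper: substitute $v=e_h$ into the error equation \eqref{ee}, then invoke \eqref{mmm1} and \eqref{mmm2} to bound the right-hand side by $Ch^k\|u\|_{k+1}\3bar e_h\3bar$. The extra remark verifying $e_h\in V_h^0$ is a welcome point of care that the paper leaves implicit.
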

\begin{proof}
By letting $v=e_h$ in (\ref{ee}), we have
\begin{eqnarray}
\3bar e_h\3bar^2&=&\ell_u(e_h)+s(Q_hu,\;\ e_h).\label{main}
\end{eqnarray}
It then follows from (\ref{mmm1}) and (\ref{mmm2}) that
\[
\3bar e_h\3bar^2 \le Ch^k\|u\|_{k+1}\3bar e_h\3bar,
\]
which implies (\ref{err1}). This completes the proof.
\end{proof}

Next, we will measure the difference between $u$ and $u_h$ in the
discrete $H^1$ semi-norm $\|\cdot\|_{1,h}$ as defined in
(\ref{March24-2013-discreteH1norm}). Note that
(\ref{March24-2013-discreteH1norm}) can be easily extended to
functions in $H^1(\Omega)+V_h$ through the inclusion map $i_W$.

\begin{corollary}
 Let $u_h\in V_h$ be the weak Galerkin finite element solution of the problem
(\ref{pde})-(\ref{bc}) arising from (\ref{wg}). Assume the exact solution $u\in H^{k+1}(\Omega)$. Then,
there exists a constant $C$ such that
\begin{equation}\label{err8}
\| u-u_h\|_{1,h} \le Ch^{k}\|u\|_{k+1}.
\end{equation}
\end{corollary}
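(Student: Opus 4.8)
The plan is to reduce the corollary to the $\3bar\cdot\3bar$ estimate already proved in the theorem, using the norm equivalence of Lemma on $\|\cdot\|_{1,h}$ versus $\3bar\cdot\3bar$ together with the approximation properties of $Q_h$. The key observation is that $\|u-u_h\|_{1,h}$ can be split through the $L^2$ projection $Q_h u$:
\[
\|u-u_h\|_{1,h} \le \|u-Q_hu\|_{1,h} + \|Q_hu-u_h\|_{1,h}.
\]
The second term is handled immediately: since $Q_hu-u_h = -e_h \in V_h$, Lemma on the equivalence (\ref{happy}) gives $\|Q_hu-u_h\|_{1,h} \le C_1^{-1}\3bar e_h\3bar$, and the theorem just proved bounds this by $Ch^k\|u\|_{k+1}$.

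The first term, $\|u-Q_hu\|_{1,h}$, must be estimated directly from the definition (\ref{March24-2013-discreteH1norm}), where $u$ is understood via the inclusion map $i_W(u) = \{u|_T, u|_{\partial T}\}$ so that $Q_hu = \{Q_0u, Q_bu\}$ and $u-Q_hu = \{u-Q_0u,\ u-Q_bu\}$. The gradient part contributes $\sum_T \|\nabla(u-Q_0u)\|_T^2$, which is bounded by $Ch^{2k}\|u\|_{k+1}^2$ by (\ref{Qh}). For the boundary part we must control $h_T^{-1}\|Q_b(u-Q_0u) - (u-Q_bu)\|_{\partial T}^2$; writing this as $h_T^{-1}\|Q_bu - Q_bQ_0u - u + Q_bu\|_{\partial T}^2 = h_T^{-1}\|Q_0u - u\|_{\partial T}^2$ (using $Q_bQ_0u = Q_0u$ on each edge once projected, i.e. $Q_b(Q_0u) - Q_bu = Q_b(Q_0u - u)$ and then combining with $-(u-Q_bu)$ — more carefully, $Q_b v_0 - v_b$ with $v_0 = u-Q_0u$, $v_b = u - Q_bu$ equals $Q_b u - Q_b Q_0 u - u + Q_b u$; using $Q_b u - v_b = Q_b u -(u-Q_bu)$, one collects to $Q_b(Q_0 u) - u$... ), the cleanest route is to just bound $\|Q_bv_0 - v_b\|_{\partial T} \le \|v_0 - Q_b v_0\|_{\partial T}^{\perp}$-type estimates by triangle inequality against $\|v_0\|_{\partial T} + \|v_b\|_{\partial T}$ and then apply the trace inequality (\ref{trace}) to $u - Q_0u$ and the approximation property of $Q_b$ to $u$ on each edge. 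In any case the trace inequality (\ref{trace}) applied to $u-Q_0u \in H^1(T)$ yields
\[
\|u - Q_0u\|_{\partial T}^2 \le C\left(h_T^{-1}\|u-Q_0u\|_T^2 + h_T\|\nabla(u-Q_0u)\|_T^2\right),
\]
so $\sum_T h_T^{-1}\|u-Q_0u\|_{\partial T}^2 \le Ch^{2k}\|u\|_{k+1}^2$ again by (\ref{Qh}). Hence $\|u-Q_hu\|_{1,h} \le Ch^k\|u\|_{k+1}$.

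The main obstacle is purely bookkeeping: getting the boundary term of $\|u-Q_hu\|_{1,h}$ into a form where (\ref{trace}) and (\ref{Qh}) apply, since the term $Q_bv_0 - v_b$ mixes the interior projection $Q_0$ and the edge projection $Q_b$ and one must be careful that $v = i_W(u) - Q_h u$ has $v_0 = u|_T - Q_0 u$ but $v_b = u|_e - Q_b u$, so $Q_b v_0 - v_b = Q_b(u - Q_0 u) - (u - Q_b u) = Q_b u - Q_b(Q_0 u) - u + Q_b u$. This simplifies: noting $Q_b$ acts on $L^2(e)$ and $Q_b(Q_0 u)$ is the edge projection of the restriction of the polynomial $Q_0 u$, we can bound $\|Q_b(u-Q_0u)\|_{\partial T} \le \|u - Q_0 u\|_{\partial T}$ and $\|u - Q_b u\|_{\partial T} \le \|u - Q_0 u\|_{\partial T}$ (since $Q_b$ is the best $L^2(e)$-approximation and $Q_0 u|_e$ is a competitor), so $\|Q_bv_0 - v_b\|_{\partial T} \le 2\|u-Q_0u\|_{\partial T}$, after which the trace inequality closes the argument. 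Assembling the two pieces and invoking $\|u\|_{k+1}$-bounds from (\ref{Qh}) gives (\ref{err8}).
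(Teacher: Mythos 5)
Your overall route is exactly the paper's: split $u-u_h$ through $Q_hu$ by the triangle inequality, bound $\|Q_hu-u_h\|_{1,h}$ by $C\3bar e_h\3bar$ via (\ref{happy}) and then by (\ref{err1}), and bound $\|u-Q_hu\|_{1,h}$ by approximation properties. The first half is fine. The problem is in your treatment of the boundary part of $\|u-Q_hu\|_{1,h}$. Writing $v=i_W(u)-Q_hu$, you correctly compute $Q_bv_0-v_b=Q_b(u-Q_0u)-(u-Q_bu)$ and split it by the triangle inequality. The piece $\|Q_b(u-Q_0u)\|_{\pT}\le\|u-Q_0u\|_{\pT}$ is fine ($Q_b$ is an $L^2(e)$ contraction), and together with (\ref{trace}) and (\ref{Qh}) it contributes $O(h^{2k})$ after the $h_T^{-1}$ weighting. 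But your bound for the other piece, $\|u-Q_bu\|_{\pT}\le\|u-Q_0u\|_{\pT}$, is justified by declaring $Q_0u|_e$ a competitor in the best-approximation problem defining $Q_bu$. It is not a competitor: $Q_0u\in P_k(T)$, so its trace on $e$ lies in $P_k(e)$, whereas $Q_b$ projects onto $P_{k-1}(e)$. A best approximation from a smaller space need not beat a particular element of a larger one. Concretely, for $k=1$ and $u=x$ with $e$ along the $x$-axis, $Q_0u=u$ so the right-hand side vanishes, while $\|u-Q_bu\|_e\sim h^{3/2}\neq 0$; your assembled claim $\|Q_bv_0-v_b\|_{\pT}\le 2\|u-Q_0u\|_{\pT}$ fails in this example.

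This is not merely cosmetic. Since $Q_b$ uses only degree $k-1$ on each edge, the best available bound is $\|u-Q_bu\|_e\le\|u-\chi\|_e$ for $\chi\in P_{k-1}(T)$, and (\ref{trace}) with a degree-$(k-1)$ approximant gives $\|u-Q_bu\|_e\le Ch_T^{k-1/2}\|u\|_{k,T}$, so $\sum_T h_T^{-1}\|u-Q_bu\|_{\pT}^2$ is only $O(h^{2k-2})$ --- a full order short of what (\ref{err8}) requires, and $O(1)$ when $k=1$. The repair is to arrange that this term never appears: the boundary component of $u-Q_hu$ should be measured after projection by $Q_b$ (i.e., represent $u$ on $\pT$ by $Q_b(u|_e)$, so the boundary mismatch is $Q_b(u|_e)-Q_bu=0$ and only the benign piece $Q_b(u-Q_0u)$ survives in $Q_bv_0-v_b$). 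That is evidently what the paper intends when it cites only (\ref{Qh}) for this step, but it is precisely the point your competitor argument papers over, so you need to state and justify that convention explicitly rather than derive the estimate from a false inequality.
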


\begin{proof}
It follows from (\ref{happy}) and (\ref{err1}) that
$$
\|Q_h u-u_h\|_{1,h}\le C\3bar Q_hu-u_h \3bar\le Ch^{k}\|u\|_{k+1}.
$$
Using the triangle inequality, (\ref{Qh}) and  the equation above, we have
$$
\| u-u_h\|_{1,h}\le\| u-Q_hu\|_{1,h}+\| Q_hu-u_h\|_{1,h}\le
Ch^{k}\|u\|_{k+1}.
$$
This completes the proof.
\end{proof}

\bigskip

In the rest of the section, we shall derive an optimal order error
estimate for the weak Galerkin finite element scheme (\ref{wg}) in
the usual $L^2$ norm by using a duality argument as was commonly
employed in the standard Galerkin finite element methods \cite{ci,
sue}. To this end, we consider a dual problem that seeks $\Phi\in
H_0^1(\Omega)$ satisfying
\begin{eqnarray}
-\nabla\cdot (a \nabla\Phi)&=& e_0\quad
\mbox{in}\;\Omega.\label{dual}
\end{eqnarray}
Assume that the above dual problem has the usual $H^{2}$-regularity.
This means that there exists a constant $C$ such that
\begin{equation}\label{reg}
\|\Phi\|_2\le C\|e_0\|.
\end{equation}

\begin{theorem} Let $u_h\in V_h$ be the weak Galerkin finite element solution of the problem
(\ref{pde})-(\ref{bc}) arising from (\ref{wg}). Assume the exact solution $u\in H^{k+1}(\Omega)$. In
addition, assume that the dual problem (\ref{dual}) has the usual
$H^2$-regularity. Then, there exists a constant $C$ such that
\begin{equation}\label{err2}
\|u-u_0\| \le Ch^{k+1}\|u\|_{k+1}.
\end{equation}
\end{theorem}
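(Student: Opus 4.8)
The plan is to carry out the standard Aubin--Nitsche duality argument adapted to the weak Galerkin framework. First I would take the dual problem (\ref{dual}) with right-hand side $e_0$, and test it against $e_0$ itself to obtain $\|e_0\|^2 = -(\nabla\cdot(a\nabla\Phi), e_0)$. The idea is to rewrite this quantity in terms of the bilinear form $a_s(\cdot,\cdot)$ applied to $Q_h\Phi$ and $e_h$, so that the error equation of Lemma~\ref{Lemma:error-equation} can be invoked. Concretely, I would apply identity (\ref{j1}) (or equivalently (\ref{key}) together with the definition of $\nabla_w$ and integration by parts) with $\phi=\Phi$ and $v=e_h$, summed over $T\in\T_h$, to get
\[
\sum_{T\in\T_h}(a\nabla_w Q_h\Phi,\nabla_w e_h)_T
= \sum_{T\in\T_h}(a\nabla\Phi,\nabla e_0)_T - \ell_\Phi(e_h),
\]
where $\ell_\Phi(v)=\sum_{T\in\T_h}\langle a(\nabla\Phi-\bbQ_h\nabla\Phi)\cdot\bn,\,v_0-v_b\rangle_\pT$. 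Integrating by parts on each element and using $-\nabla\cdot(a\nabla\Phi)=e_0$ together with the single-valuedness of $e_b$ and the continuity of $a\nabla\Phi$ across interior edges, the first sum becomes $\|e_0\|^2 + \sum_{T\in\T_h}\langle a\nabla\Phi\cdot\bn,\,e_0-e_b\rangle_\pT$. Rearranging and adding the stabilization term $s(Q_h\Phi,e_h)$ to both sides, one arrives at
\[
a_s(Q_h\Phi,e_h) = \|e_0\|^2 + \ell_{\Phi}(e_h) + s(Q_h\Phi,e_h),
\]
so that $\|e_0\|^2 = a_s(e_h,Q_h\Phi) - \ell_\Phi(e_h) - s(Q_h\Phi,e_h)$.

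Next I would use the error equation (\ref{ee}) with $v=Q_h\Phi\in V_h^0$ (noting $\Phi\in H_0^1(\Omega)$ so $Q_h\Phi$ has vanishing boundary component), giving $a_s(e_h,Q_h\Phi)=\ell_u(Q_h\Phi)+s(Q_hu,Q_h\Phi)$. Substituting, one obtains
\[
\|e_0\|^2 = \ell_u(Q_h\Phi) + s(Q_hu,Q_h\Phi) - \ell_\Phi(e_h) - s(Q_h\Phi,e_h),
\]
and it remains to bound each of the four terms by $Ch^{k+1}\|u\|_{k+1}\|e_0\|$. The term $\ell_\Phi(e_h)$ is handled directly by (\ref{mmm2}) applied with $w=\Phi$ (which lives in $H^2$): $|\ell_\Phi(e_h)|\le Ch\|\Phi\|_2\3bar e_h\3bar \le Ch\cdot\|e_0\|\cdot Ch^k\|u\|_{k+1}$ using (\ref{err1}) and the regularity (\ref{reg}). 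Similarly $|s(Q_h\Phi,e_h)|\le Ch\|\Phi\|_2\3bar e_h\3bar$ by (\ref{mmm1}), yielding the same bound. The stabilization term $s(Q_hu,Q_h\Phi)$ is estimated by Cauchy--Schwarz on the stabilization form followed by the bound $|s(Q_hw,Q_h\Phi)|\le Ch^k\|u\|_{k+1}\cdot Ch\|\Phi\|_2$ via two applications of the argument behind (\ref{mmm1}).

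The main obstacle is the first term, $\ell_u(Q_h\Phi)=\sum_{T\in\T_h}\langle a(\nabla u-\bbQ_h\nabla u)\cdot\bn,\,Q_0\Phi-Q_b\Phi\rangle_\pT$. A naive application of (\ref{mmm2}) with $v=Q_h\Phi$ only gives $Ch^k\|u\|_{k+1}\3bar Q_h\Phi\3bar$, which is $O(h^k)$ rather than $O(h^{k+1})$ — one power of $h$ short. The fix is to exploit that $\nabla u - \bbQ_h\nabla u$ is $L^2$-orthogonal on each $T$ to $[P_{k-1}(T)]^d$, so one may subtract from $Q_0\Phi-Q_b\Phi$ any edgewise-constant-times-normal quantity; more precisely, I would insert $\bbQ_h\nabla\Phi\cdot\bn$ type corrections or, more simply, write $Q_0\Phi-Q_b\Phi = (Q_0\Phi - Q_b\Phi) - (\Phi - \Phi)$ and use that on each edge $\langle a(\nabla u-\bbQ_h\nabla u)\cdot\bn, c\rangle_e = 0$ for constants $c$ when $k\ge 1$, combined with the superconvergence-type estimate $\|Q_0\Phi-Q_b\Phi\|_\pT \le \|Q_0\Phi-\Phi\|_\pT+\|\Phi-Q_b\Phi\|_\pT$, each of which is $O(h^{3/2}\|\Phi\|_2)$ after the trace inequality (\ref{trace}) and (\ref{Qh}). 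Pairing the $O(h^{k-1/2})$ bound on $\big(\sum_T h_T\|a(\nabla u-\bbQ_h\nabla u)\|_\pT^2\big)^{1/2}$ from (\ref{Rh}) and (\ref{trace}) with the $O(h^{3/2})$ bound on $\big(\sum_T h_T^{-1}\|Q_0\Phi-Q_b\Phi\|_\pT^2\big)^{1/2}$ delivers the extra power of $h$, giving $|\ell_u(Q_h\Phi)|\le Ch^{k+1}\|u\|_{k+1}\|\Phi\|_2$. Combining all four bounds and cancelling one factor $\|e_0\|$ yields $\|e_0\|\le Ch^{k+1}\|u\|_{k+1}$; finally the triangle inequality together with (\ref{Qh}) — namely $\|u-u_0\|\le\|u-Q_0u\|+\|e_0\|$ — gives the claimed estimate (\ref{err2}).
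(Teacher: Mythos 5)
Your skeleton is the same duality argument as the paper's: test the dual problem with $e_0$, use the identity (\ref{j1}) with $\phi=\Phi$, $v=e_h$ to reach $\|e_0\|^2=\ell_u(Q_h\Phi)+s(Q_hu,Q_h\Phi)-s(e_h,Q_h\Phi)\mp\ell_\Phi(e_h)$ (your sign bookkeeping for $\ell_\Phi$ is the internally consistent one), and your bounds for $s(Q_hu,Q_h\Phi)$, $s(e_h,Q_h\Phi)$ and $\ell_\Phi(e_h)$ match (\ref{2nd-term-complete})--(\ref{4th-term-complete}). The gap is in the one term you yourself flag as the obstacle, $\ell_u(Q_h\Phi)$, and it is genuine. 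Two problems. First, the $L^2(T)$-orthogonality of $\nabla u-\bbQ_h\nabla u$ to $[P_{k-1}(T)]^d$ does \emph{not} imply the edge orthogonality $\langle a(\nabla u-\bbQ_h\nabla u)\cdot\bn,\,c\rangle_e=0$ that you invoke: the normal trace of a volume-orthogonal remainder has no reason to be orthogonal to constants on an edge. Second, and more seriously, the bound $\|\Phi-Q_b\Phi\|_{\pT}=O(h^{3/2}\|\Phi\|_2)$ on which your estimate rests is false for $k=1$: there $Q_b$ is the $L^2$ projection onto $P_0(e)$, and the best available bound is $\|\Phi-Q_b\Phi\|_e\le Ch^{1/2}\|\nabla\Phi\|_T$ --- one cannot gain more than a single power of $h$ from a constant approximation on the edge. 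Pairing this with the $O(h^{k-1/2})$ bound on $\bigl(\sum_T\|a(\nabla u-\bbQ_h\nabla u)\|_{\pT}^2\bigr)^{1/2}$ gives only $O(h)$ when $k=1$, i.e.\ no improvement over the energy estimate, so your argument does not produce the $O(h^{k+1})$ rate in the lowest-order (and most important) case.

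The paper's resolution is structural rather than quantitative: the entire $\Phi-Q_b\Phi$ contribution vanishes identically, see (\ref{l21}). On each edge $a\,\bbQ_h\nabla u\cdot\bn$ belongs to $P_{k-1}(e)$ (with $a$ piecewise constant), and $\Phi-Q_b\Phi$ is $L^2(e)$-orthogonal to $P_{k-1}(e)$ --- the relevant orthogonality is that of $Q_b$ on the edge, not that of $\bbQ_h$ on the element --- which removes the $\bbQ_h\nabla u$ part; the remainder $\sum_{T\in\T_h}\langle a\nabla u\cdot\bn,\,\Phi-Q_b\Phi\rangle_{\pT}$ then telescopes to zero by continuity of the flux $a\nabla u\cdot\bn$ across interior edges together with $\Phi=Q_b\Phi=0$ on $\partial\Omega$. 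Only the $Q_0\Phi-\Phi$ half of the splitting needs an estimate, and for that half your $O(h^{3/2}\|\Phi\|_2)$ trace bound is correct and yields (\ref{l22}). Replace your treatment of the $\Phi-Q_b\Phi$ half with this exact cancellation and the rest of your proof goes through.
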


\begin{proof}
By testing (\ref{dual}) with $e_0$ we obtain
\begin{eqnarray}\nonumber
\|e_0\|^2&=&-(\nabla\cdot (a\nabla\Phi),e_0)\\
&=&\sum_{T\in\T_h}(a\nabla \Phi,\ \nabla e_0)_T-\sum_{T\in\T_h}\l
a\nabla\Phi\cdot\bn,\ e_0- e_b\r_{\pT},\label{jw.08}
\end{eqnarray}
where we have used the fact that $e_b=0$ on $\partial\Omega$.
Setting $\phi=\Phi$ and $v=e_h$ in (\ref{j1}) yields
\begin{eqnarray}
(a\nabla_w Q_h\Phi,\;\nabla_w e_h)_T=(a\nabla\Phi,\;\nabla e_0)_T-\l
(a\bbQ_h\nabla\Phi)\cdot\bn,\ e_0-e_b\r_\pT.\label{j1-new}
\end{eqnarray}
Substituting (\ref{j1-new}) into (\ref{jw.08}) gives
\begin{eqnarray}
\|e_0\|^2&=&(a\nabla_w e_h,\ \nabla_w Q_h\Phi)+\sum_{T\in\T_h}\l
a(\bbQ_h\nabla\Phi-\nabla\Phi)\cdot\bn,\ e_0-e_b\r_{\pT}\nonumber\\
&=&(a\nabla_w e_h,\ \nabla_w Q_h\Phi)+\ell_\Phi(e_h).\label{m2}
\end{eqnarray}
It follows from the error equation (\ref{ee}) that
\begin{eqnarray}
(a\nabla_w e_h,\ \nabla_w Q_h\Phi)&=&\ell_u(Q_h\Phi)
+s(Q_hu,\ Q_h\Phi)-s(e_h,\ Q_h\Phi).\label{m3}
\end{eqnarray}
By combining (\ref{m2}) with (\ref{m3}) we arrive at
\begin{eqnarray}\label{m4}
\|e_0\|^2=\ell_u(Q_h\Phi)
+s(Q_hu,\ Q_h\Phi)-s(e_h,\ Q_h\Phi)
+\ell_\Phi(e_h).
\end{eqnarray}

Let us bound the terms on the right hand side of (\ref{m4}) one by
one. Using the triangle inequality, we obtain
\begin{eqnarray}
|\ell_u(Q_h\Phi)|&=&\left|\sum_{T\in\T_h} \langle a(\nabla u-\bbQ_h\nabla
u)\cdot\bn,\; Q_0\Phi-Q_b\Phi\rangle_\pT \right|\nonumber\\
&\le&\left|\sum_{T\in\T_h} \langle a(\nabla u-\bbQ_h\nabla
u)\cdot\bn,\; Q_0\Phi-\Phi\rangle_\pT \right|\nonumber\\
&+&\left|\sum_{T\in\T_h} \langle a(\nabla u-\bbQ_h\nabla
u)\cdot\bn,\; \Phi-Q_b\Phi\rangle_\pT \right|.\label{1st-term}
\end{eqnarray}
We first use the definition of $Q_b$ and
the fact that $\Phi=0$ on $\partial\Omega$ to obtain
\begin{eqnarray}
\sum_{T\in\T_h}\l a(\nabla u-\bbQ_h\nabla u)\cdot\bn, \Phi-Q_b\Phi\r_\pT =\sum_{T\in\T_h}\l a\nabla u\cdot\bn, \Phi-Q_b\Phi\r_\pT = 0.\label{l21}
\end{eqnarray}
From the trace inequality (\ref{trace}) and the estimate (\ref{Qh})
we have
$$
\left(\sum_{T\in\T_h}\|Q_0\Phi-\Phi\|^2_\pT\right)^{1/2} \leq C
h^{\frac32}\|\Phi\|_2
$$
and
$$
\left(\sum_{T\in\T_h}\|a(\nabla u-\bbQ_h\nabla
u)\|^2_\pT\right)^{1/2} \leq Ch^{k-\frac12}\|u\|_{k+1}.
$$
Thus, it follows from the Cauchy-Schwarz inequality and the above
two estimates that
\begin{eqnarray}
&&\left|\sum_{T\in\T_h} \langle a(\nabla u-\bbQ_h\nabla
u)\cdot\bn,\; Q_0\Phi-\Phi\rangle_\pT \right|\nonumber\\
&&\le C\left(\sum_{T\in\T_h}\|a(\nabla u-\bbQ_h\nabla
u)\|^2_\pT\right)^{1/2}
\left(\sum_{T\in\T_h}\|Q_0\Phi-\Phi\|^2_\pT\right)^{1/2} \nonumber\\
&&\le  C h^{k+1} \|u\|_{k+1}\|\Phi\|_2.\label{l22}
\end{eqnarray}
Combining (\ref{1st-term}) with (\ref{l21}) and (\ref{l22}) yields
\begin{eqnarray}\label{1st-term-complete}
|\ell_u(Q_h\Phi)| \leq C h^{k+1} \|u\|_{k+1}
\|\Phi\|_2.
\end{eqnarray}
Analogously, it follows from the definition of $Q_b$, the trace
inequality (\ref{trace}), and the estimate (\ref{Qh}) that
\begin{eqnarray}\nonumber
\left|s(Q_hu,\; Q_h\Phi)\right|&\le & \rho\sum_{T\in\T_h}h_T^{-1}
\left|(Q_b(Q_0u)-Q_bu,\ Q_b(Q_0\Phi)-Q_b\Phi)_\pT\right|\nonumber\\
&\le & \rho\sum_{T\in\T_h}h_T^{-1}\|Q_b(Q_0u-u)\|_\pT\|Q_b(Q_0\Phi-\Phi)\|_\pT\nonumber\\
&\le & \rho\sum_{T\in\T_h}h_T^{-1}\|Q_0u-u\|_\pT\|Q_0\Phi-\Phi\|_\pT\nonumber\\
&\le& C\left(\sum_{T\in\T_h}h_T^{-1}\|Q_0u-u\|^2_\pT\right)^{1/2}
\left(\sum_{T\in\T_h}h_T^{-1}\|Q_0\Phi-\Phi\|^2_\pT\right)^{1/2}\nonumber  \\
&\le& Ch^{k+1}\|u\|_{k+1}\|\Phi\|_2.\label{2nd-term-complete}
\end{eqnarray}
The estimates (\ref{mmm1}) with $k=1$ and the error estimate
(\ref{err1}) imply
\begin{eqnarray}\label{3rd-term-complete}
|s(e_h,\ Q_h\Phi)|\le Ch\|\Phi\|_2\3bar e_h\3bar\le
Ch^{k+1}\|u\|_{k+1}\|\Phi\|_2.
\end{eqnarray}
Similarly, it follows from (\ref{mmm2}) and (\ref{err1}) that
\begin{eqnarray}\label{4th-term-complete}
|\ell_\Phi(e_h)| &\le&  Ch^{k+1}\|u\|_{k+1}\|\Phi\|_2.
\end{eqnarray}
Now substituting (\ref{1st-term-complete})-(\ref{4th-term-complete})
into (\ref{m4}) yields
$$
\|e_0\|^2 \leq C h^{k+1}\|u\|_{k+1} \|\Phi\|_2,
$$
which, combined with the regularity assumption (\ref{reg}) and the
triangle inequality, gives the desired optimal order error estimate
(\ref{err2}).
\end{proof}

\section{Numerical Examples}\label{Section:numerical}
In this section, we examine the WG method by testing its convergence
and flexibility for solving second order elliptic problems. In the
test of convergence, the first ($k=1$) and second ($k=2$) order of
weak Galerkin elements are used in the construction of the finite
element space $V_h$. In the test of flexibility of the WG method,
elliptic problems are solved on finite element partitions with
various configurations, including triangular mesh, deformed
rectangular mesh, and honeycomb mesh in two dimensions and deformed
cubic mesh in three dimensions. Our numerical results confirm the
theory developed in previous sections; namely, optimal rate of
convergence in $H^1$ and $L^2$ norms. In addition, it shows a great
flexibility of the WG method with respect to the shape of finite
element partitions.

Let $u_h=\{u_0,u_b\}$ and $u$ be the solution to the weak Galerkin
equation and the original equation, respectively. The error is
defined by $e_h=u_h-Q_hu=\{e_0,e_b\}$, where $e_0=u_0-Q_0u$ and
$e_b=u_b-Q_bu$. Here $Q_h u=\{Q_0 u,Q_b u\}$ with $Q_h$ as the $L^2$
projection onto appropriately defined spaces. The following norms
are used to measure the error in all of the numerical experiments:
\begin{eqnarray*}
&&H^1\mbox{ semi-norm: } \3bar e_h\3bar=
\bigg(\sum_{T\in\mathcal{T}_h}\int_T|\nabla_w e_h|^2dT
+h^{-1}\int_{\partial T}|Q_be_0-e_b|^2ds\bigg)^{\frac12},\\
&&\mbox{ Element-based }L^2\mbox{ norm: }\|e_0\|
=\bigg(\sum_{T\in\mathcal{T}_h}\int_K|e_0|^2dT\bigg)^{\frac12}.
\end{eqnarray*}

\subsection{On Triangular Mesh}
Consider the second order elliptic equation that seeks an unknown function $u=u(x,y)$ satisfying
$$-\nabla\cdot(a\nabla)=f$$
in the square domain $\Omega=(0,1)\times(0,1)$ with Dirichlet boundary condition. The boundary condition $u|_{\partial\Omega}=g$ and $f$ are chosen such that the exact solution is given by $u=\sin(\pi x)\cos(\pi y)$ and
$$a=\begin{pmatrix}
x^2+y^2+1&xy\\
xy&x^2+y^2+1
\end{pmatrix}.$$

The triangular mesh  $\T_h$ used in this example is constructed by:
1) uniformly partitioning the domain into $n\times n$
sub-rectangles; 2) dividing each rectangular element by the diagonal
line with a negative slope. The mesh size is denoted by $h=1/n$. The
lowest order ($k=1$) weak Galerkin element is used for obtaining the
weak Galerkin solution $u_h=\{u_0,u_b\}$; i.e., $u_0$ and $u_b$ are
polynomials of degree $k=1$ and degree $k-1=0$ respectively on each
element $T\in \T_h$.

Table \ref{tab:ex1} shows the convergence rate for WG solutions
measured in $H^1$ and $L^2$ norms.  The numerical results indicate
that the WG solution of linear element is convergent with rate
$O(h)$ in $H^1$ and $O(h^2)$ in $L^2$ norms.

\begin{table}
  \caption{Example 1. Convergence rate of lowest order WG ($k=1$) on triangular meshes.}
  \label{tab:ex1}
  \center
  \begin{tabular}{||c|c|c|c|c||}
    \hline\hline
   $h$ & $\3bar e_h \3bar$ & order & $\| e_0\|$ & order \\
    \hline
   1/4   &  1.3240e+00 &            & 1.5784e+00 &        \\ \hline
   1/8   &  6.6333e-01 & 9.9710e-01 & 3.6890e-01 & 2.0972 \\ \hline
   1/16  &  3.3182e-01 & 9.9933e-01 & 9.0622e-02 & 2.0253 \\ \hline
   1/32  &  1.6593e-01 & 9.9983e-01 & 2.2556e-02 & 2.0064 \\ \hline
   1/64  &  8.2966e-02 & 9.9998e-01 & 5.6326e-03 & 2.0016 \\ \hline
   1/128 &  4.1483e-02 & 1.0000     & 1.4078e-03 & 2.0004\\ \hline\hline
   \end{tabular}
\end{table}

In the second example, we consider the Poisson problem that seeks an
unknown function $u=u(x,y)$ satisfying
$$
-\Delta u=f
$$
in the square domain $\Omega=(0,1)^2$. Like the first example, the
exact solution here is given by $u=\sin(\pi x)\cos(\pi y)$ and $g$
and $f$ are chosen accordingly to match the exact solution.

The very same triangular mesh is employed in the numerical
calculation. Associated with this triangular mesh $\T_h$, two weak
Galerkin elements with $k=1$ and $k=2$ are used in the computation
of the weak Galerkin finite element solution $u_h$. For simplicity,
these two elements shall be referred to as $(P_1(T), P_{0}(e))$ and
$(P_2(T), P_{1}(e))$.

Tables \ref{tab:ex1_1} and \ref{tab:ex1_2} show the numerical
results on rate of convergence for the WG solutions in $H^1$ and
$L^2$ norms associated with $k=1$ and $k=2$, respectively. Note that
$\|e_h\|_{\mathcal{E}_h}$ is a discrete $L^2$ norm for the
approximation $u_b$ on the boundary of each element. Optimal rates
of convergence are observed numerically for each case.

\begin{table}
  \caption{Example 2. Convergence rate of lowest order WG ($k=1$) on triangular meshes.}
  \label{tab:ex1_1}
  \center
  \begin{tabular}{||c||c|c|c||}
    \hline\hline
   $h$ & $\3bar e_h\3bar$ & $\|e_h\|$ & $\|e_h\|_{\mathcal{E}_h}$  \\
    \hline\hline
   1/2     &2.7935e-01   &6.1268e-01   &5.7099e-02    \\ \hline
   1/4     &1.4354e-01   &1.5876e-01   &1.3892e-02    \\ \hline
   1/8     &7.2436e-02   &4.0043e-02   &3.5430e-03    \\ \hline
   1/16    &3.6315e-02   &1.0033e-02   &8.9325e-04    \\ \hline
   1/32    &1.8170e-02   &2.5095e-03   &2.2384e-04    \\ \hline
   1/64    &9.0865e-03   &6.2747e-04   &5.5994e-05    \\ \hline
   1/128   &4.5435e-03   &1.5687e-04   &1.4001e-05    \\ \hline\hline
   $O(h^r),r=$  &9.9232e-01   &1.9913       &1.9955   \\ \hline\hline
   \end{tabular}
\end{table}

\begin{table}
  \caption{Example 2. Convergence rate of second order WG ($k=2$) on triangular meshes.}
  \label{tab:ex1_2}
  \center
  \begin{tabular}{||c||c|c|c||}
    \hline\hline
   $h$ & $\3bar e_h\3bar$ & $\|e_h\|$ & $\|e_h\|_{\mathcal{E}_h}$    \\
    \hline\hline

   1/2     &1.7886e-01   &9.4815e-02   &3.3742e-02    \\ \hline
   1/4     &4.8010e-02   &1.2186e-02   &4.9969e-03    \\ \hline
   1/8     &1.2327e-02   &1.5271e-03   &6.6539e-04    \\ \hline
   1/16    &3.1139e-03   &1.9077e-04   &8.5226e-05    \\ \hline
   1/32    &7.8188e-04   &2.3829e-05   &1.0763e-05    \\ \hline
   1/64    &1.9586e-04   &2.9774e-06   &1.3516e-06    \\ \hline
   1/128   &4.9009e-05   &3.7210e-07   &1.6932e-07    \\ \hline\hline
   $O(h^r),r=$  &1.9769   &2.9956       &2.9453       \\ \hline\hline
      \end{tabular}
\end{table}

\subsection{On Quadrilateral Meshes}

In this test, we solve the same poisson equation considered in the
second example by using quadrilateral meshes. We start with an
initial quadrilateral mesh, shown as in Figure \ref{fig:ex2} (Left).
The mesh is then successively refined by connecting the barycenter
of each coarse element with the middle points of its edges, shown as
in Figure \ref{fig:ex2} (Right). For the quadrilateral mesh $\T_h$,
two weak Galerkin elements with $k=1$ and $k=2$ are used in the WG
finite element scheme (\ref{wg}).

Tables \ref{tab:ex2} and \ref{ex2_2} show the rate of convergence
for the WG solutions in $H^1$ and $L^2$ norms associated with $k=1$
and $k=2$ on quadrilateral meshes, respectively. Optimal rates of
convergence are observed numerically.

\begin{figure}[!htb]
\centering
\begin{tabular}{cc}
  \resizebox{2in}{1.8in}{\includegraphics{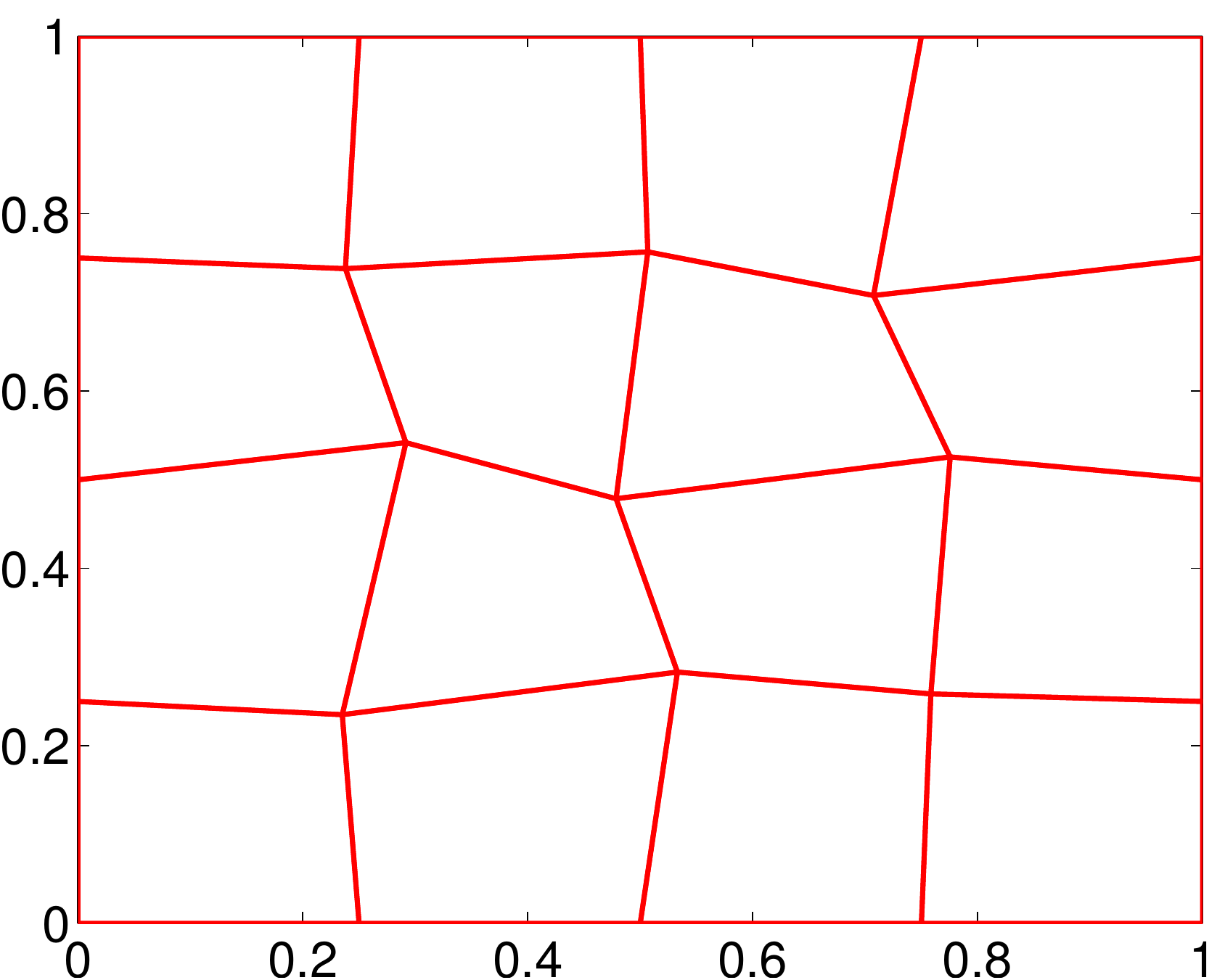}} \quad
  \resizebox{2in}{1.8in}{\includegraphics{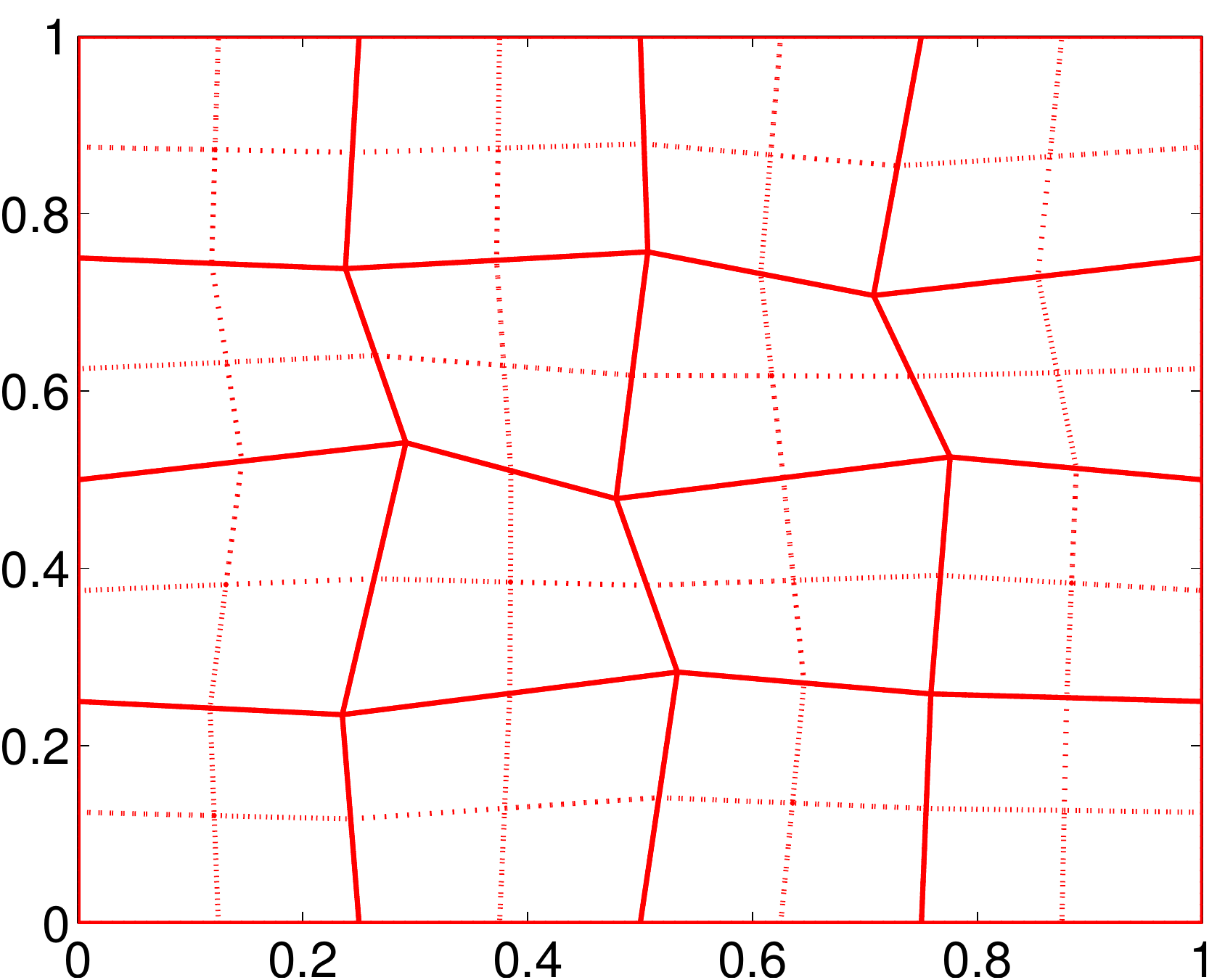}}
\end{tabular}
\caption{Mesh level 1 (Left) and mesh level 2 (Right) for example
2.}\label{fig:ex2}
\end{figure}

\begin{table}[!htb]
  \caption{Example 3. Error and rate of convergence for first order WG on quadrilateral meshes.}
  \label{tab:ex2}
  \center
  \begin{tabular}{||c|c|c|c|c||}
      \hline\hline
   $h$ & $\3bar e_h \3bar$ & order & $\| e_0\|$ & order \\
    \hline
   2.9350e-01 &  1.9612e+00 &            &2.1072e+00 &        \\ \hline
   1.4675e-01 &  1.0349e+00 &9.2225e-01  &5.7219e-01 & 1.8808 \\ \hline
   7.3376e-02 &  5.2434e-01 &9.8094e-01  &1.4458e-01 & 1.9847 \\ \hline
   3.6688e-02 &  2.6323e-01 &9.9418e-01  &3.5655e-02 & 2.0197 \\ \hline
   1.8344e-02 &  1.3179e-01 &9.9808e-01  &8.6047e-03 & 2.0509 \\ \hline
   9.1720e-03 &  6.5925e-02 &9.9934e-01  &2.0184e-03 & 2.0919 \\ \hline\hline
   \end{tabular}
\end{table}

\begin{table}
  \caption{Example 3. Error and rate of convergence for second order WG on quadrilateral meshes.}
  \label{ex2_2}
  \center
  \begin{tabular}{||c||c|c|c|c|c|c||}
    \hline\hline
   $h$ & $\3bar e_h\3bar$ & order & $\|e_0\|$  & order \\
    \hline\hline
   1/2     &1.7955e-01 &        &1.4891e-01  &           \\ \hline
   1/4     &8.7059e-02 &1.0444  &1.8597e-02  &3.0013     \\ \hline
   1/8     &2.8202e-02 &1.6262  &2.1311e-03  &3.1254     \\ \hline
   1/16    &7.8114e-03 &1.8521  &2.4865e-04  &3.0995     \\ \hline
   1/32    &2.0347e-03 &1.9408  &2.9964e-05  &3.0528     \\ \hline
   1/64    &5.1767e-04 &1.9747  &3.6806e-06  &3.0252     \\ \hline
   1/128   &1.3045e-04 &1.9885  &4.5627e-07  &3.0120     \\ \hline\hline
      \end{tabular}
\end{table}

\subsection{On Honeycomb Mesh}
In the forth test, we solve the Poisson equation on the domain of
unit square with exact solution $u=\sin(\pi x)\sin(\pi y)$. The
Dirichlet boundary data $g$ and $f$ are chosen to match the exact
solution. The numerical experiment is performed on the honeycomb
mesh as shown in Figure \ref{fig:ex3}. The linear WG element ($k=1$)
is used in this numerical computation.

The error profile is presented in Table \ref{tab:ex3}, which
confirms the convergence rates predicted by the theory.

\begin{figure}[!htb]
\centering
\begin{tabular}{c}
  \resizebox{2.2in}{1.9in}{\includegraphics{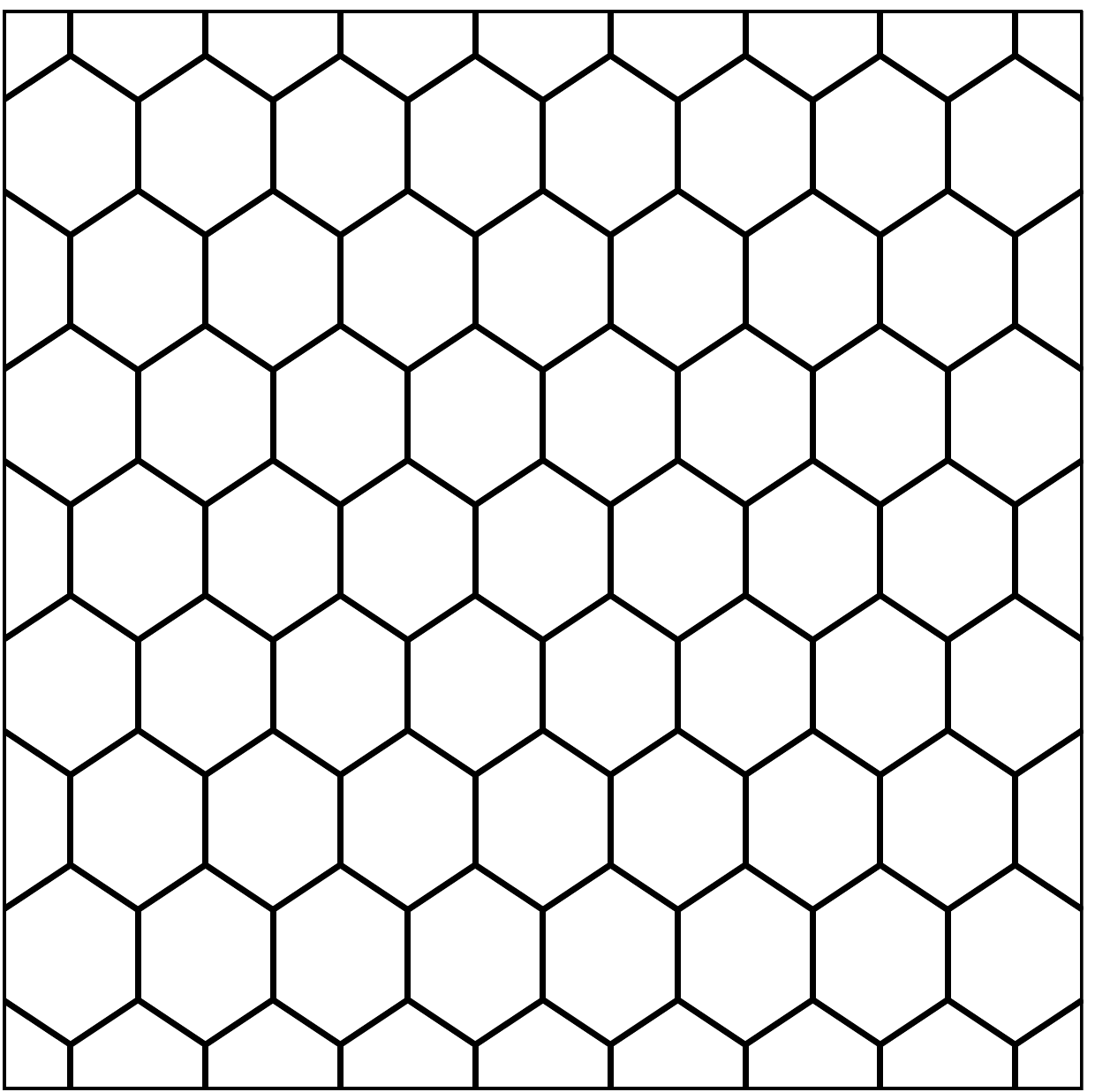}}
\end{tabular}
\caption{Honeycomb mesh for example 3.}\label{fig:ex3}
\end{figure}

\begin{table}[!htb]
  \caption{Example 4. Error and rate of convergence for linear WG element on honeycomb meshes.}\label{tab:ex3}
  \center
  \begin{tabular}{||c|c|c|c|c||}
      \hline\hline
   $h$ & $\3bar e_h \3bar$ & order & $\| e_0\|$ & order \\
    \hline
 1.6667e-01  & 3.3201e-01 &             &  1.6006e-02 &        \\ \hline
 8.3333e-02  & 1.6824e-01 & 9.8067e-01  &  3.9061e-03 &2.0347  \\ \hline
 4.1667e-02  & 8.4784e-02 & 9.8867e-01  &  9.6442e-04 &2.0180  \\ \hline
 2.0833e-02  & 4.2570e-02 & 9.9392e-01  &  2.3960e-04 &2.0090  \\ \hline
 1.0417e-02  & 2.1331e-02 & 9.9695e-01  &  5.9711e-05 &2.0047  \\ \hline
 5.2083e-03  & 1.0677e-02 & 9.9839e-01  &  1.4904e-05 &2.0022
  \\ \hline\hline
   \end{tabular}
\end{table}

\subsection{On Deformed Cubic Meshes}
In the fifth test, the Poisson equation is solved on a three
dimensional domain $\Omega=(0,1)^3$. The exact solution is chosen as
\begin{eqnarray*}
u=\sin(2\pi x)\sin(2\pi y)\sin(2\pi z),
\end{eqnarray*}
and the Dirichlet boundary date $g$ and $f$ are chosen accordingly
to match the exact solution.

Deformed cubic meshes are used in this test, see Figure
\ref{fig:ex4} (Left) for an illustrative element. The construction
of the deformed cubic mesh starts with a coarse mesh. The next level
of mesh is derived by refining each deformed cube element into $8$
sub-cubes, as shown in Figure \ref{fig:ex4} (Right). Table
\ref{tab:ex4} reports some numerical results for different level of
meshes. It can be seen that a convergent rate of $O(h)$ in $H^1$ and
$O(h^2)$ in $L^2$ norms are achieved for the corresponding WG finite
element solutions. This confirms the theory developed in earlier
sections.

\begin{figure}[!htb]
\centering
\begin{tabular}{cc}
  \resizebox{2.3in}{2in}{\includegraphics{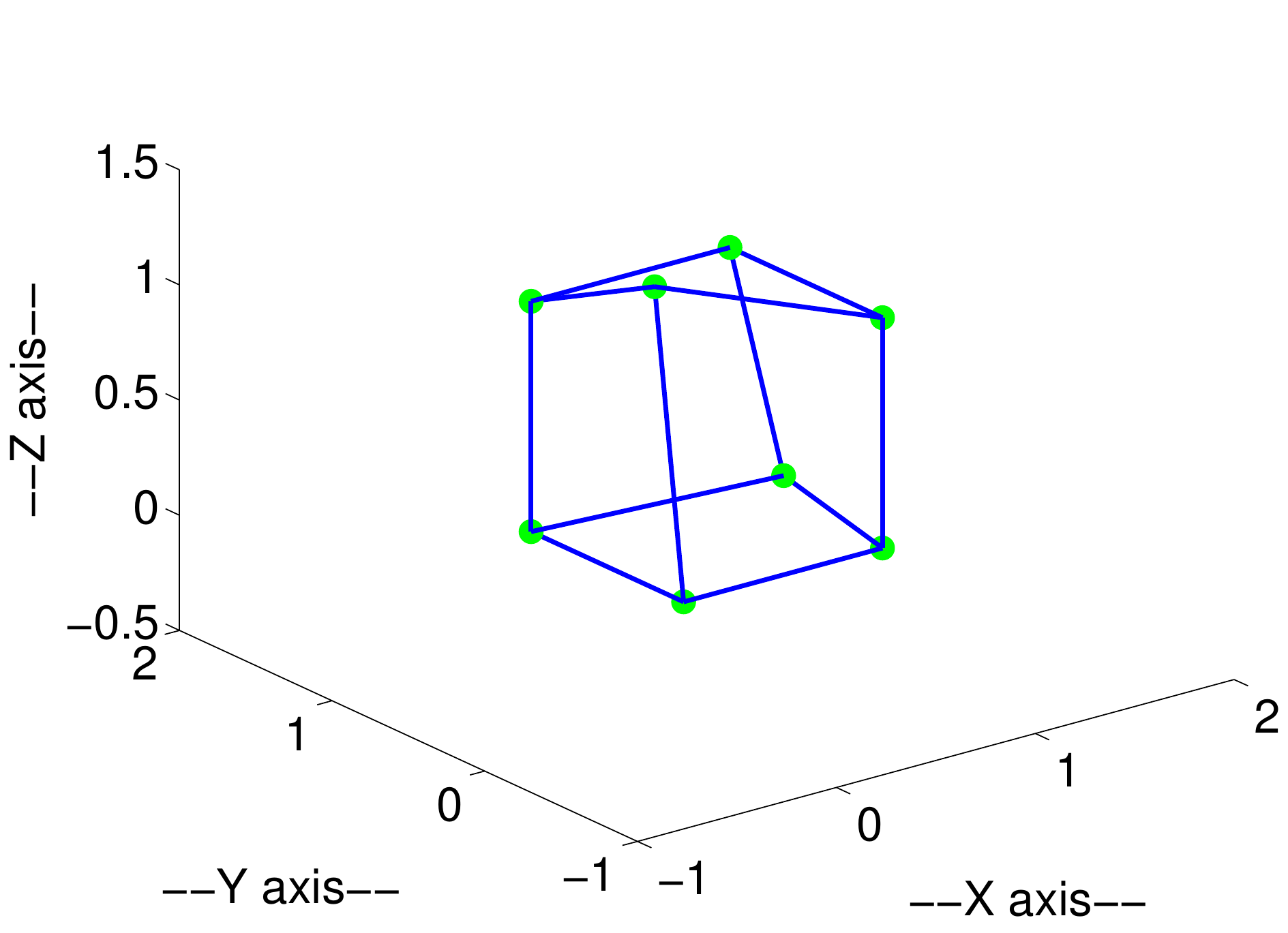}}
  \resizebox{2.4in}{2in}{\includegraphics{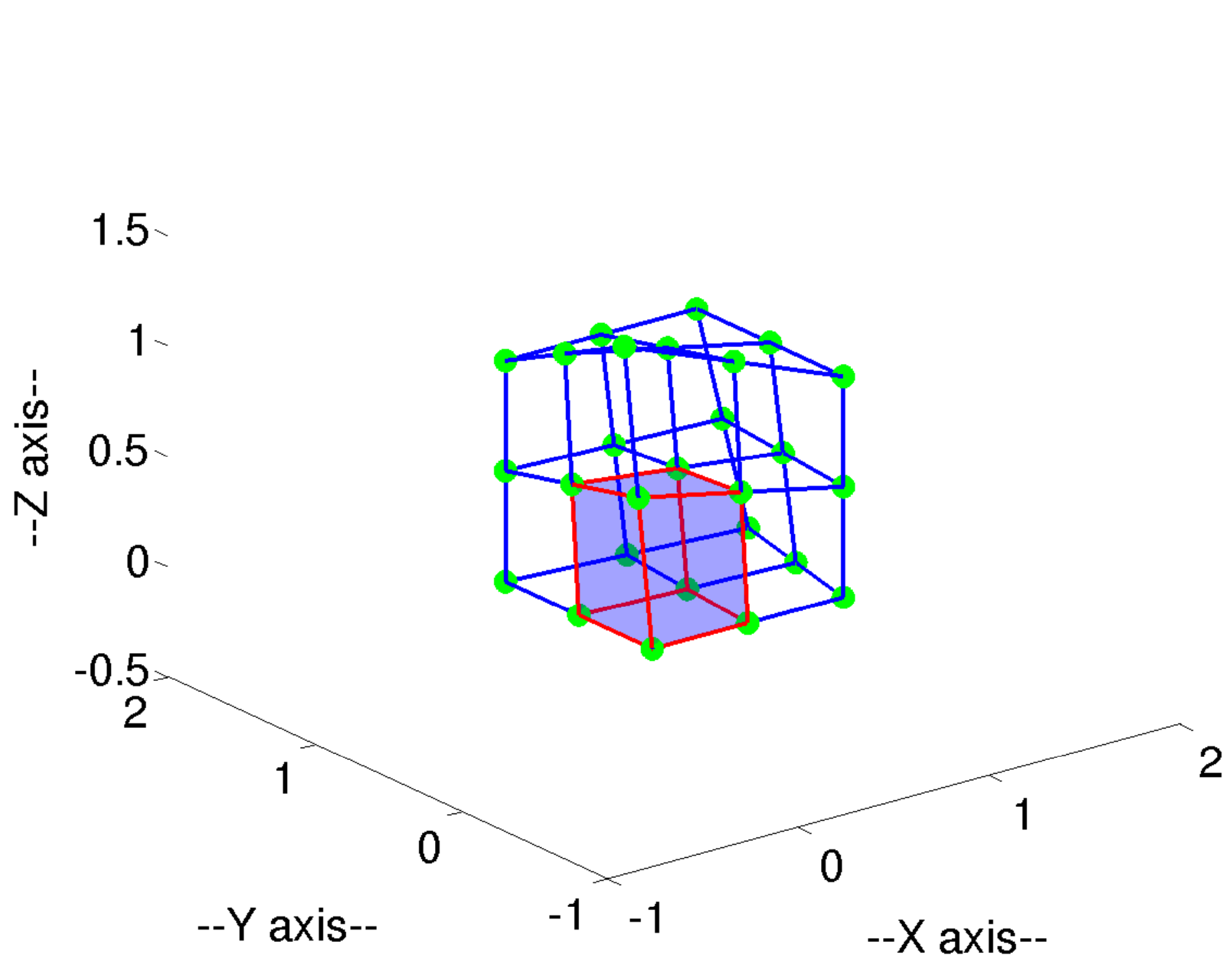}}
\end{tabular}
\caption{Mesh level 1 (Left) and mesh level 2 (Right) for example
4.}\label{fig:ex4}
\end{figure}

\begin{table}[!htb]
  \caption{Example 5. Error and convergence rate for $k=1$ on deformed cubic mesh.}
  \label{tab:ex4}
  \center
  \begin{tabular}{||c|c|c|c|c||}
    \hline\hline
   $h$ & $\3bar e_h \3bar$ & order & $\| e_0\|$ & order \\
    \hline
1/2       &5.7522     &             &9.1990     & \\ \hline
1/4       &1.3332     & 2.1092      &1.5684     & 2.5522\\ \hline
1/8       &6.4071e-01 & 1.0571      &2.7495e-01 & 2.5121\\ \hline
1/16      &3.2398e-01 & 9.8377e-01  &6.8687e-02 & 2.0011\\ \hline
1/32      &1.6201e-01 & 9.9982e-01  &1.7150e-02 & 2.0018\\ \hline\hline
   \end{tabular}
\end{table}

\vfill\eject


\begin{thebibliography}{99}
\bibitem{bdm}
{\sc F. Brezzi, J. Douglas, Jr., and L.D. Marini}, {\em Two families
of mixed finite elements for second order elliptic problems}, Numer.
Math., 47 (1985), pp.~217-235.

\bibitem{sue}
{\sc S. Brenner and R. Scott}, {\em The Mathematical Theory of
Finite Element Mathods},  Springer-Verlag, New York, 1994.

\bibitem{ci}
{\sc P.G. Ciarlet}, {\em The Finite Element Method for Elliptic
Problems}, North-Holland, New York, 1978.

\bibitem{cgl}
{\sc B. Cockburn, J. Gopalakrishnan, and R. Lazarov}, {\em Unified
hybridization of discontinuous Galerkin, mixed, and continuous
Galerkin methods for second order elliptic problems}, SIAM J. Numer.
Anal. 47 (2009), pp.~1319-1365.

\bibitem{li-wang}
{\sc Q. Li and J. Wang} {\em Weak Galerkin Finite Element Methods
for Parabolic Equations}, arXiv:1212.3637, to appear in the Journal
of Numerical Methods for PDEs.



\bibitem{mwy-biharmonic}
{\sc L. Mu, J. Wang, and X. Ye}, {\em Weak Galerkin Finite Element
Methods for the Biharmonic Equation on Polytopal Meshes},
arXiv:1303.0927.


\bibitem{rt}
{\sc P. Raviart and J. Thomas}, {\em A mixed finite element method
for second order elliptic problems}, Mathematical Aspects of the
Finite Element Method, I. Galligani, E. Magenes, eds., Lectures
Notes in Math. 606, Springer-Verlag, New York, 1977.

\bibitem{wy}
{\sc J. Wang and X. Ye}, {\em A weak Galerkin finite element method
for second-order elliptic problems},  J. Comp. and Appl. Math, 241
(2013), pp.~103-115. arXiv:1104.2897v1.

\bibitem{wy-m}
{\sc J. Wang and X. Ye}, {\em A weak Galerkin mixed finite element
method for second-order elliptic problems}, arXiv:1202.3655v2, 2012.


\end{thebibliography}
\end{document}